\documentclass[11pt]{article}
\usepackage[numbers]{natbib}
\usepackage{graphicx,amsmath,epsfig,amssymb,amsfonts,amsthm,enumerate,verbatim,todonotes,hyperref,dsfont}
\usepackage{tikz,tikz-cd}
\usetikzlibrary{matrix,arrows}

\newtheorem{theorem}{Theorem}[section]
\newtheorem{corollary}[theorem]{Corollary}
\newtheorem{lemma}[theorem]{Lemma}

\newtheorem{question}[theorem]{Question}
\newtheorem{claim}[theorem]{Claim}

\theoremstyle{definition}
\newtheorem{definition}[theorem]{Definition}

\newcommand{\restrict}{\mathord{\upharpoonright}}
\newcommand{\concat}{\mathbin{\raisebox{1ex}{\scalebox{.7}{$\frown$}}}}

\begin{document}

\title{Computable analogs of cardinal characteristics: Prediction and Rearrangement}
\author{Iv\'an Ongay-Valverde\footnote{Work done while being supported by CONACYT scholarship for Mexican student studying abroad.}\\\emph{Department of Mathematics}\\\emph{University of Wisconsin--Madison}\\\emph{Email: ongay@math.wisc.edu}\vspace{.4cm}\\Paul Tveite\\\emph{Software Engeenering Department}\\\emph{Google}\\\emph{Email: paul.tveite@google.com}
}
\date{\begin{tabular}{rl}First Draft:&July 11, 2016\\Current Draft:& April 14, 2019\end{tabular}}
\maketitle
\begin{abstract}
There has recently been work by multiple groups in extracting the properties associated with cardinal invariants of the continuum and translating these properties into similar analogous combinatorial properties of computational oracles. Each property yields a highness notion in the Turing degrees. In this paper we study the highness notions that result from the translation of the evasion number and its dual, the prediction number, as well as two versions of the rearrangement number. When translated appropriately, these yield four new highness notions. We will define these new notions, show some of their basic properties and place them in the computability-theoretic version of Cicho\'{n}'s diagram.
\end{abstract}
\section{Introduction}

Recent work of Rupprecht \cite{rupprechtthesis} and, with some influence of Rupprecht but largely independently, Brendle, Brooke-Taylor, Ng, and Nies \cite{bbtnn} developed and showed a process for extracting the combinatorial properties of cardinal characteristics and translating them into highness properties of oracles with related combinatorial properties. Some of the analogs so derived are familiar computability-theoretic properties, some are new characterizations of existing notions, and some are completely new. It is interesting to notice that many of the proofs of relationships between the cardinals in the set-theoretic setting translate to the effective setting. The work so far has mostly focused on the cardinal characteristics of Cicho\'{n}'s diagram. 

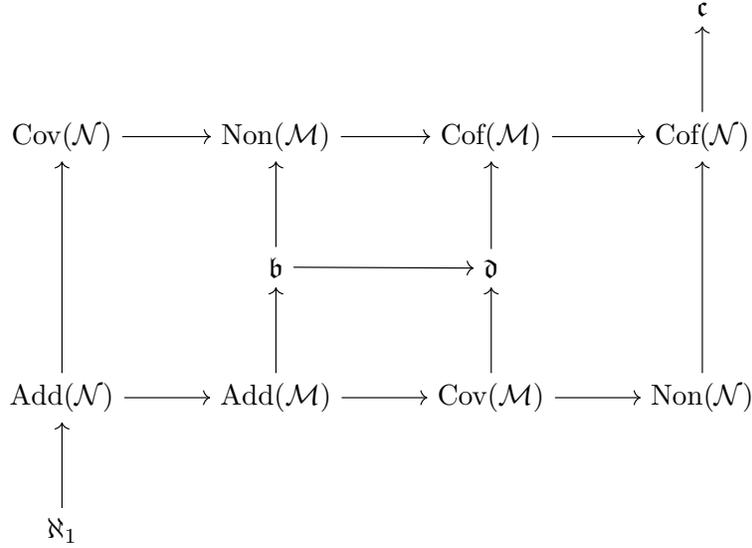
\begin{figure}
\centering
\begin{tikzpicture}
  \matrix (m) [matrix of math nodes,row sep=3em,column sep=3em]
  {
     &&&\mathfrak{c}\\
     \mbox{Cov}(\mathcal{N})&\mbox{Non}(\mathcal{M})&\mbox{Cof}(\mathcal{M})&\mbox{Cof}(\mathcal{N})\\
     &\mathfrak{b}&\mathfrak{d}&\\
     \mbox{Add}(\mathcal{N})&\mbox{Add}(\mathcal{M})&\mbox{Cov}(\mathcal{M})&\mbox{Non}(\mathcal{N})\\
     \aleph_1\\};
  \path[commutative diagrams/.cd, every arrow, every label,font=\scriptsize]
    (m-5-1) edge (m-4-1)
    (m-4-1) edge (m-2-1)
    		edge (m-4-2)
    (m-4-2) edge (m-4-3)
            edge (m-3-2)
    (m-4-3) edge (m-4-4)
    		edge (m-3-3)
    (m-4-4) edge (m-2-4)
    (m-3-2) edge (m-3-3)
    		edge (m-2-2)
    (m-3-3) edge (m-2-3)
    (m-2-1) edge (m-2-2)
    (m-2-2) edge (m-2-3)
    (m-2-3) edge (m-2-4)
    (m-2-4) edge (m-1-4);
\end{tikzpicture}
\caption{Cicho\'{n}'s diagram}
\end{figure}

The nodes in Cicho\'{n}'s diagram, figure 1, have the usual notation and meaning as defined in \cite{kunen2011set}, we will work with most of them in this paper. It is important to notice that the arrows in figure 1 stand for inequalities, with $A\rightarrow B$ in the diagram indicating $A\leq B$.

There is a purely semantic formulation of the translation scheme to an effective notion where all of these characteristics can be viewed as either an unbounding number or a dominating number along the lines of $\mathfrak{b}$ and $\mathfrak{d}$ for a different relationship between two spaces. They can then be semantically converted to the appropriate highness notion. For all the details of the semantic scheme, see \cite{rupprechtthesis} or \cite{bbtnn}. 

An alternative, somewhat intuitive way to think about this translation scheme is to frame it as follows: when working with cardinal characteristics on the set theory side, it is common to build models by forcing extensions that have specific properties, one way to do this is to force a characteristic to be larger by building an extension which has a new object that negates the desired property for a specific collection from the ground model. If we reinterpret the ground model as the computable objects, and the extension as adding those things computable from an oracle, the degree corresponding to the characteristic will be exactly the combinatorial definition needed to negate the characteristic property for the collection of computable objects. Among other things, this means that the highness notions actually end up looking like the negations of the characteristics that they were derived from.

For example, let us take the unbounding number $\mathfrak{b}$. In building a forcing extension to make $\mathfrak{b}$ larger, we would want to add a function which \emph{does} bound a collection of functions from the ground model. When translated to a computability-theoretic highness notion, this becomes an oracle which computes a function dominating every computable function. This is exactly the set of oracles of high degree. Similarly, for the dominating number $\mathfrak{d}$, in building a forcing extension to make $\mathfrak{d}$ larger, we would want to add a function which is not dominated by any function from the ground model. When translated to the computability side, this becomes an oracle which computes a function not dominated by any computable function, i.e.\ of hyperimmune degree. Some of the analogs, like these, are well-studied, and some were introduced by Rupprecht in \cite{rupprechtthesis}. 

\newcommand{\nonn}{Computes a Schnorr Random}
\newcommand{\covm}{Weakly meager engulfing}
\newcommand{\cofm}{Not low for weak 1-gen}
\newcommand{\cofn}{Not low for Schnorr}
\newcommand{\unbound}{High}
\newcommand{\dominating}{Hyper-immune}
\newcommand{\addn}{Schnorr engulfing}
\newcommand{\nonm}{Computes a weak 1-gen}
\newcommand{\addm}{Meager engulfing}
\newcommand{\covn}{Weakly Schnorr engulfing}
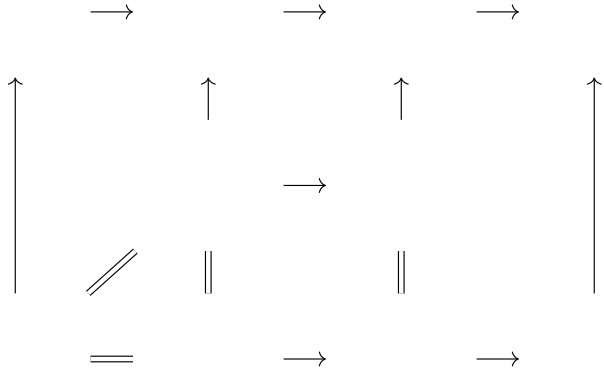
\begin{figure}
\centering
\begin{tikzpicture}[auto,
	every node/.style ={rectangle, fill=white,
      text width=4.5em, text centered, opacity = 0, text opacity = 1, minimum height=4.5em}]
  \matrix (m) [row sep=1.5em,column sep=1.5em]
  {
     \node (11) {\nonn};&\node (21) {\covm};&\node (31) {\cofm};&\node (41) {\cofn};\\
     ;&\node (22) {\unbound};&\node (32) {\dominating};&;\\
     \node (13) {\addn};&\node (23) {\addm};&\node (33) {\nonm};&\node(43) {\covn};\\
     };
	\path[commutative diagrams/.cd, every arrow, every label,font=\scriptsize]
    (11) edge (21)
    (21) edge (31)
    (31) edge (41)
    (22) edge (21)
    	 edge (32)
    (32) edge (31)
    (13) edge (11)
         edge[-,commutative diagrams/equal] (22)
         edge[-,commutative diagrams/equal] (23)
    (23) edge (33)
         edge[-,commutative diagrams/equal] (22)
    (33) edge[-,commutative diagrams/equal] (32)
         edge (43)
    (43) edge (41) ;
\end{tikzpicture}
\caption{Effective Cicho\'{n}'s diagram}
\end{figure}

Figure 2 is a summary of the results known in this area. Here, arrows actually do mean implication, where the lower-left highness properties are generally stronger than the upper-right.
 It is possible to find all the definitions in \cite{rupprechtthesis}, but it is important to remark that some of the Rupprecht terminology is different. 
 
 In this paper, we will expand on this work by looking at four different cardinal characteristics not appearing in Cicho\'{n}'s diagram. First, we will examine the evasion number, a cardinal characteristic first introduced by Blass in \cite{blass1994cardinal}, as well as its less-studied dual, the prediction number. We will also look at two forms of the so-called rearrangement number, as introduced by Blass et al.\ in \cite{blass2016rearrangement}. In all these cases, we will give the correct effective analogs, and prove relationships between these new highness notions and their relationships with other properties which are analogous to well-studied cardinal characteristics.

The questions in this paper were independently studied by Noam Greenberg, Gregory Igusa, Rutger Kuyper, Menachem Magidor and Dan Turetsky. There is significant overlap between their results and those we present below.

We thank the referee for his helpful insights to improve the overall presentation of the paper and helping us improve Theorem \ref{Weaklynotprediction}. Also, we thank Joe Miller for all his advise and help.

\section{Prediction and Evasion}

\subsection{Definitions}

\begin{definition}[Blass \cite{blass1994cardinal}] A \emph{predictor} is a pair $P=(D,\pi)$ such that $D\in[\omega]^{\omega}$ (infinite subsets of $\omega$) and $\pi$ is a sequence $\langle\pi_n:n\in D\rangle$ where each $\pi_n:\omega^n\rightarrow\omega$. By convention, we will sometimes refer to $\pi_n(\sigma)$ by simply $\pi(\sigma)$. This predictor $P$ \emph{predicts} a function $x\in\omega^{\omega}$ if, for all but finitely many $n\in D, \pi_n(x\restrict_n)=x(n)$. Otherwise $x$ \emph{evades} $P$. The \emph{evasion number} $\mathfrak{e}$ is the smallest cardinality of any family $E\subseteq\omega^{\omega}$ such that no single predictor predicts all members of $E$. \end{definition}

We will also make use of the dual to $\mathfrak{e}$, which is explored by Brendle and Shelah in \cite{brendle2}.

\begin{definition}The \emph{prediction number}, which we will call $\mathfrak{v}$ (as Kada in \cite{kada1998baire}), is the smallest cardinality of any family $V$ of predictors such that every function is predicted by a member of $V$.\end{definition}

The known results for $\mathfrak{e}$ and $\mathfrak{v}$ position them as illustrated in figure \ref{cichonwithevasion} relative to Cicho\'{n}'s diagram.

\begin{figure}
\centering
\begin{tikzpicture}
  \matrix (m) [matrix of math nodes,row sep=3em,column sep=1em,minimum width=2em]
  {
     &&&\mbox{Cof}(\mathcal{M})&&\mbox{Cof}(\mathcal{N})\\
     \mbox{Cov}(\mathcal{N})&&\mbox{Non}(\mathcal{M})&&\mathfrak{v}&\\
     &&\mathfrak{b}&\mathfrak{d}&&\\
     &\mathfrak{e}&&\mbox{Cov}(\mathcal{M})&&\mbox{Non}(\mathcal{N})\\
     \mbox{Add}(\mathcal{N})&&\mbox{Add}(\mathcal{M})&&&\\};
  \path[commutative diagrams/.cd, every arrow, every label,font=\scriptsize]
    (m-5-1) edge (m-2-1)
    		edge (m-4-2)
            edge (m-5-3)
    (m-2-1) edge (m-2-3)
    (m-4-2) edge (m-2-3)
    		edge (m-4-4)
    (m-2-3) edge (m-1-4)
    		edge (m-2-5)
    (m-3-3) edge (m-2-3)
    		edge (m-3-4)
    (m-5-3) edge (m-3-3)
    		edge (m-4-4)
    (m-1-4) edge (m-1-6)
    (m-3-4) edge (m-1-4)
    (m-4-4) edge (m-3-4)
    		edge (m-4-6)
    		edge (m-2-5)
    (m-2-5) edge (m-1-6)
    (m-4-6) edge (m-1-6);
\end{tikzpicture}
\caption{Cicho\'{n}'s diagram including $\mathfrak{e}$ and $\mathfrak{v}$.} \label{cichonwithevasion}
\end{figure}

In order to effectivize our prediction-related cardinal characteristics, we must first effectivize the definition of a predictor.

\begin{definition}A \emph{computable predictor} is a pair $P=(D,\langle\pi_n:n\in D\rangle)$ where $D\subseteq\omega$ is infinite and computable, each $\pi_n:\omega^n\rightarrow\omega$ is a computable function and the sequence $\langle\pi_n:n\in D\rangle$ is computable.

Similarly, we define an \emph{$A$-computable predictor} as the relativized version where all objects are computable relative to some oracle $A$. 

Finally, we define an oracle $A$ to be of \emph{evasion degree} if there is a function $f\leq_T A$ which evades all computable predictors, and $A$ is of \emph{prediction degree} if there is a predictor $P\leq_T A$ which predicts all computable functions.
\end{definition}

Because of the fact that we negate the original statements of the definitions of cardinal characteristics, under our scheme the evasion number $\mathfrak{e}$ is an analog to being a prediction degree, and the prediction number $\mathfrak{v}$ is an analog to being an evasion degree.

We present, in Theorem \ref{eofacts}, facts about $\mathfrak{e}$ and $\mathfrak{v}$ represented by Cicho\'{n}'s diagram with $\mathfrak{e}$ and $\mathfrak{v}$ included, as well as their translations into effective analogs. The second table is ordered in a way that stress the duality between the evasion and prediction number.

In the study of cardinal characteristics (in Set Theory), duality plays a central roll through the use of Galois-Tukey connections and the characterization of numbers as $\mathfrak{b}(R)$ or $\mathfrak{d}(R)$ for a relation $R$. In the earlier case, whenever $\mbox{cof}(\mathcal{I})\leq \mbox{cof}(\mathcal{J})$ the Galois-Tukey connection ensures that $\mbox{add}(\mathcal{I})\geq \mbox{add}(\mathcal{J}) $, for the later, whenever $\mathfrak{b}(R)\leq \mathfrak{b}(R')$ then $\mathfrak{d}(R)\geq \mathfrak{d}(R')$. For more on this regard both \cite{bartoszynskibook} and \cite{blass1994cardinal} are good references. In this context, the concept of duality is strong enough to allow researchers to just do proofs for one of the numbers in a dual pair. For example, in \cite{brendle2}, the authors show all results for $\mathfrak{e}$ and state that the analogues are true for $\mathfrak{v}$ due to duality.

On the Computability Theory side, duality still exists through the effectivization of the Galois-Tukey connections (as shown by Rupprecht in \cite{rupprechtthesis}). Nevertheless, some of the highness notions collapses. For example, the analogue of $\mathfrak{d}$ is equal to the analogue of $\mbox{Cov}(\mathcal{M}) $ but the analogue of $\mathfrak{b}$ can be split from the analogue of $\mbox{Non}(\mathcal{M})$. This implies that duality, in this context, is weaker. It seems that it works only on implications (and not in splitings).

This situation causes that some of the results in the Computability Theory side look stronger than the ones from Set Theory (see Theorem \ref{Schnorrpredict}) while there are others that stay the same (see Theorem \ref{predmeagerengulf}). All of these reason made us opt to not cite directly duality, from prediction degrees, in the proofs of the evasion degrees. Notice that Theorem \ref{predmeagerengulf} and \ref{meagerengulfevade} (and their proofs) are clearly dual one to another but the proofs of the pair of Theorems \ref{pred1gen} and \ref{meagerengulfevade}, that should be dual, are not, since the later one involves DNC degrees.

\begin{theorem}\label{eofacts}The following relationships are known for $\mathfrak{e}$.
\small{\center\begin{tabular}{|l|l|c|}
\hline Cardinal Char.&Highness Properties&Theorem\\
\hline $\mbox{add}(\mathcal{N})\leq \mathfrak{e}$ \cite{blass1994cardinal}&Schnorr engulfing $\Rightarrow$ prediction degree&\ref{Schnorrpredict}\\
\hline $\mathfrak{e}\leq\mbox{non}(\mathcal{M})$ \cite{blass1994cardinal}&prediction degree $\Rightarrow$ weakly meager engulfing&\ref{predmeagerengulf}\\
\hline $\mathfrak{e}\leq\mbox{cov}(\mathcal{M})$ \cite{kada1998baire}&prediction degree $\Rightarrow$ weakly 1-generic&\ref{pred1gen}\\
\hline CON($\mathfrak{e}<\mbox{add}(\mathcal{M})$) \cite{brendle1}&meager engulfing $\not\Rightarrow$ prediction degree&False\\
\hline CON($\mathfrak{b}<\mathfrak{e}$) \cite{brendle2}&prediction degree $\not\Rightarrow$ high&\ref{predhighsep}\\
\hline CON($\mathfrak{e}<\mbox{cov}(\mathcal{N})$) \cite{bartoszynskibook}&computes Schnorr Random $\not\Rightarrow$ prediction degree &\ref{predschnorr}\\
\hline
CON($\mbox{cov}(\mathcal{N})<\mathfrak{e}$) \cite{brendle2}&prediction degree $\not\Rightarrow$ computes Schnorr Random & Open\\
\hline
\end{tabular}\\}  
\normalsize Similarly, for $\mathfrak{v}$ (all results can be found in \cite{brendle2}, unless otherwise stated):
\small{\center\begin{tabular}{|l|l|c|}
\hline Cardinal Char.&Highness Properties&Theorem\\
\hline $\mathfrak{v}\leq\mbox{cof}(\mathcal{N})$&evasion degree $\Rightarrow$ not low for Schnorr tests&\ref{evadeschnorr}\\
\hline $\mbox{cov}(\mathcal{M})\leq\mathfrak{v}$&weakly 1-generic $\Rightarrow$ evasion degree&\ref{hyperimmuneevade}\\
\hline $\mbox{non}(\mathcal{M})\leq\mathfrak{v}$ \cite{kada1998baire}&weakly meager engulfing $\Rightarrow$ evasion degree&\ref{meagerengulfevade}\\
\hline CON($\mbox{cof}(\mathcal{M})<\mathfrak{v})$&evasion degree $\not\Rightarrow$ not low for 1-generics&Open\\
\hline CON($\mathfrak{v}<\mathfrak{d}$)&hyperimmune $\not\Rightarrow$ evasion degree&False\\
\hline CON($\mbox{non}(\mathcal{N})<\mathfrak{v}$)\cite{bartoszynskibook}&evasion degree $\not\Rightarrow$ weakly Schnorr engulfing&\ref{Evasion not engulfing}\\
\hline CON($\mathfrak{v}<\mbox{non}(\mathcal{N})$) \cite{bartoszynskibook}&weakly Schnorr Engulfing$\not\Rightarrow$ evasion degree&\ref{Weaklynotprediction}\\
\hline

\end{tabular}}

\

\normalsize These results can be seen in figure \ref{cichonwithevasion} and figure \ref{computablecichonwithevasion}.
\end{theorem}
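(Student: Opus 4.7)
The plan is to establish each row of the two tables as a separate theorem in the subsections that follow, so this result functions as a roadmap rather than a standalone proof; the labels in the right-hand column point to where each entry is verified. The entries divide into three categories, and the strategy differs accordingly.

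For the positive implications (e.g., Schnorr engulfing $\Rightarrow$ prediction degree, prediction degree $\Rightarrow$ weakly $1$-generic, weakly meager engulfing $\Rightarrow$ evasion degree, and evasion degree $\Rightarrow$ not low for Schnorr tests), the plan is to translate the set-theoretic proofs cited in the first column via the general scheme of Rupprecht and Brendle et al.: computable objects play the role of the ground model, and the oracle plays the role of the forcing extension. Concretely, given an oracle $A$ with the hypothesized highness property, I would run the combinatorial construction from the corresponding cardinal inequality relative to $A$, applied to the computable witnesses, to produce the $A$-computable predictor or the $A$-computable evading function. Each such argument is essentially mechanical once the correspondence is set up, though one does have to check that the set-theoretic construction is uniform enough to relativize cleanly.

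For the $\not\Rightarrow$ rows, routine translation does not apply and direct computability-theoretic constructions are needed. The plan is to build specific oracles that satisfy one highness property while witnessing a failure of the other. For example, separating prediction degree from high will involve constructing an $A$-computable predictor predicting all computable functions whose degree fails to compute a dominating function, likely via a forcing or priority argument that preserves non-domination; separating prediction degree from Schnorr randomness, and separating ``not low for Schnorr tests'' from evasion degree, will follow a similar template but require more delicate preservation of the relevant measure-theoretic properties along the construction.

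The main obstacle lies in two places. First, in the negative results just described, where a genuine coding or preservation argument is required and no shortcut from the set-theoretic consistency proofs is available. Second, and more strikingly, in the entry labeled ``False'': the consistency of $\mathfrak{e} < \mathrm{add}(\mathcal{M})$ does \emph{not} translate, so instead of building a counterexample one must prove the opposite implication — that meager engulfing always implies prediction degree — directly. This asymmetry between the cardinal and effective settings is itself one of the more interesting phenomena the theorem records, and its proof will require extracting more combinatorial content than the set-theoretic side gives.
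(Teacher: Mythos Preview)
Your identification of this as a roadmap theorem is correct, and your plan for the positive implications and for the main separation (prediction degree $\not\Rightarrow$ high, which is indeed a nontrivial forcing construction) matches the paper. However, you misjudge where the effort lies in two places.

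First, the ``False'' rows do not require extracting new combinatorial content. They fall out immediately from known \emph{collapses} in the effective Cicho\'n diagram: meager engulfing is equivalent to high (and to Schnorr engulfing), and hyperimmune degree is equivalent to computing a weak $1$-generic. Once Theorem~\ref{Schnorrpredict} (high $\Rightarrow$ prediction degree) and Theorem~\ref{hyperimmuneevade} (computes a weak $1$-generic $\Rightarrow$ evasion degree) are in hand, the two ``False'' entries are automatic. So rather than being an obstacle, these rows illustrate exactly the phenomenon you mention---that the effective picture is coarser than the set-theoretic one---but for free, not via a new argument.

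Second, the separation referenced as Theorem~\ref{predschnorr} is not a delicate measure-preservation construction. It is a one-line corollary: take any hyperimmune-free degree that computes a Schnorr random (such degrees exist by standard basis theorems); by Theorem~\ref{pred1gen} it cannot be a prediction degree. Similarly, the separation ``not low for Schnorr tests $\not\Rightarrow$ evasion'' is obtained not by a direct priority argument but by building a non-evasion degree of positive effective packing dimension (using Downey--Greenberg clumpy trees) and invoking the fact that computably traceable sets have packing dimension zero. In both cases the paper leverages existing computability-theoretic machinery rather than running a bespoke construction, so your template of ``build an oracle preserving one property while killing the other'' is more laborious than what is actually needed.
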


\begin{figure}
\centering
\begin{tikzpicture}[auto,
	every node/.style ={rectangle, fill=white,
      text width=4.5em, text centered,
      minimum height=2em}]
  \matrix (m) [row sep=1.5em,column sep=1.5em]
  {
     &\node (12) {High or DNC};&\node (13) {Hyper-immune or DNC};&&\node (15) {Not computably traceable};\\
     \node (21) {Computes a Schnorr Random};&\node (22) {Weakly meager engulfing};&\node (23) {Not low for weak 1-gen};&\node (24) {Evasion degree};&\node (25) {Not low for Schnorr tests};\\
     &\node (32) {Prediction degree};&\node (33) {Hyper-immune degree};&&\\
     &\node (42) {High degree};&&&\\
     \node (51) {Schnorr engulfing};&\node (52) {Meager engulfing};&\node (53) {Computes a weak 1-gen};&&\node (55) {Weakly Schnorr engulfing};\\
     };
	\path[commutative diagrams/.cd, every arrow, every label,font=\scriptsize]
    (24) edge[dotted] (13)
    (32) edge[dotted] (21)
    (21) edge (22)
    (22) edge (23)
    (23) edge (24)
    (24) edge (25)
    (32) edge (22)
    	 edge (33)
    (33) edge (23)
    (42) edge (32)
    (53) edge (55)
    (52) edge (53)
    (55) edge (25)
    (51) edge (21)
	(12) edge[-,commutative diagrams/equal] (22)
	(23) edge[-,commutative diagrams/equal] (13)
	(25) edge[-,commutative diagrams/equal] (15)
	(51) edge[-,commutative diagrams/equal] (52)
	(51) edge[-,commutative diagrams/equal] (42)
	(42) edge[-,commutative diagrams/equal] (52)
    (53) edge[-,commutative diagrams/equal] (33);
\end{tikzpicture}
\caption{Effective Cicho\'{n}'s diagram including prediction and evasion degrees. Dotted lines are open questions.} \label{computablecichonwithevasion}
\end{figure}
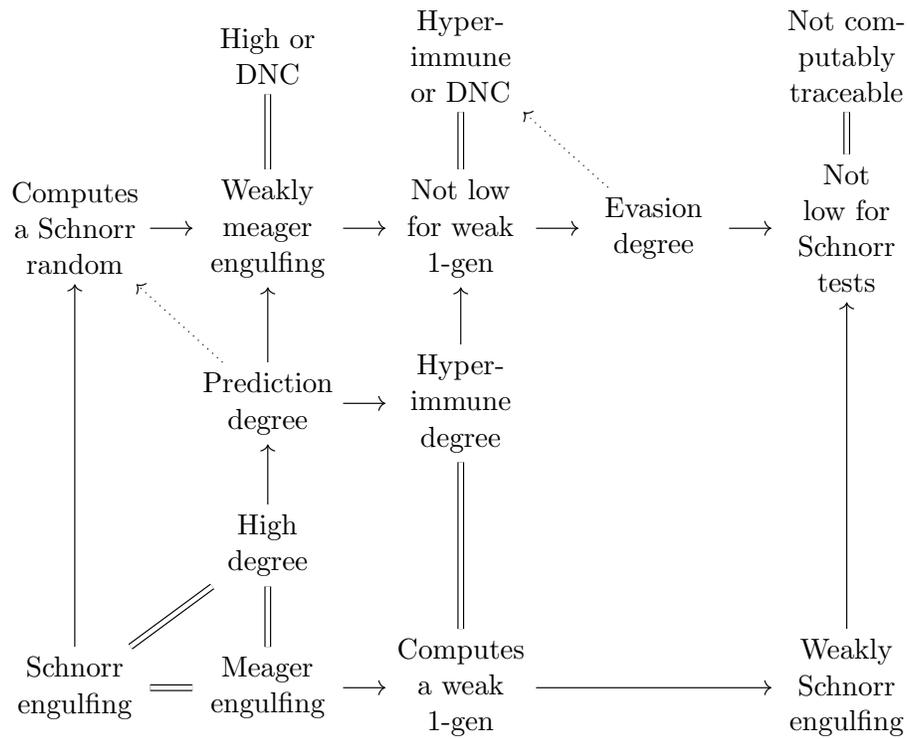

\subsection{Prediction Degrees}

\begin{theorem}\label{Schnorrpredict}If $A\in2^{\omega}$ is high, then it is of prediction degree.\end{theorem}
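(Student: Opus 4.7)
The plan is to invoke the standard characterization of highness due to Martin: $A$ is high iff there exists a function $g \leq_T A$ that dominates every total computable function. Having fixed such a $g$, I will build an $A$-computable predictor $P = (\omega, \langle \pi_n : n \in \omega\rangle)$ by a Turing-reduction argument that combines $g$ with an enumeration $\varphi_0,\varphi_1,\ldots$ of partial computable functions, together with the standard stage function $\varphi_{j,s}$.

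Explicitly, given $\sigma \in \omega^n$, I define $\pi_n(\sigma)$ as follows: using $g(n)$ as a time budget, search for the least index $j < n$ such that $\varphi_{j,g(n)}(i) \downarrow = \sigma(i)$ for every $i < n$ and $\varphi_{j,g(n)}(n)\downarrow$; if such a $j$ is found, output $\varphi_{j,g(n)}(n)$, otherwise output $0$. This is clearly $A$-computable because $g$ is $A$-computable and the rest is uniformly computable.

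The verification step is to show that this $P$ predicts every total computable $f$. Fix such an $f$ and let $e$ be its least index, so $\varphi_e = f$. For each $j < e$ with $\varphi_j \neq f$, there is a witness $i_j$ at which $\varphi_j$ either diverges or converges to a value different from $f(i_j)$; in either case, the computable function sending $m \mapsto \mu s\,[\varphi_{j,s}(i_j)\downarrow\text{ and }\varphi_{j,s}(i_j)\neq f(i_j),\text{ or testimony that }\varphi_j(i_j)\uparrow\text{ up to }m]$ is dominated by $g$ cofinitely, so for all sufficiently large $n$, the candidate $j$ is filtered out in the test. Similarly, the time needed to see $\varphi_{e,t}(i)\downarrow = f(i)$ for $i \leq n$ is a computable function of $n$, hence dominated by $g(n)$ eventually. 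Therefore, for cofinitely many $n$, the least surviving index in the search is either $e$ itself or some $j < e$ with $\varphi_j = f$, and in either case $\pi_n(f\restrict_n) = f(n)$.

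The main obstacle is the second half of the verification: showing that every bad index $j < e$ really is removed eventually. For divergent $\varphi_j$ this is immediate from the convergence test, but for total $\varphi_j \neq f$ we need $g$ to dominate the computable function that locates and certifies a disagreement with $f$ — this is where highness is used in an essential way, since an arbitrary hyperimmune $A$ would not suffice. Once these bookkeeping details are handled, the conclusion that $A$ has prediction degree follows directly.
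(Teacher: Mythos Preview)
Your proof is correct, but it goes through a different characterization of highness than the paper uses. The paper invokes the fact (due to Jockusch) that a high $A$ can $A$-computably enumerate a list of indices for exactly the \emph{total} computable functions; the predictor then just walks down this list until it finds a total $\varphi_e$ agreeing with $\sigma$ on its length, and outputs $\varphi_e(n)$. This sidesteps all the time-budget bookkeeping: because every index on the list is already total, there is no need to worry about partial functions or stage bounds. Your approach via a dominating function $g$ is more hands-on and slightly longer, but has the virtue of being self-contained (it does not appeal to the enumeration-of-totals result as a black box).

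One small correction to your analysis: you identify the ``main obstacle'' as ruling out bad \emph{total} $\varphi_j \neq f$, and say this is where domination is essential. In fact that case is automatic: once $n$ exceeds a point $i_j$ of disagreement, either $\varphi_{j,g(n)}(i_j)\uparrow$ (so $j$ fails the convergence test) or $\varphi_{j,g(n)}(i_j)\downarrow = \varphi_j(i_j) \neq f(i_j)$ (so $j$ fails the agreement test). No domination is needed here. The place where domination is genuinely required is the positive side: ensuring that for the true least index $e$, the computations $\varphi_{e,g(n)}(i)$ for $i \leq n$ all halt within $g(n)$ steps. This does not affect the validity of your argument, only the framing.
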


\begin{proof}Let $A$ be high and set $D=\omega$. We will use the fact that if $A$ is high, then $A$ can enumerate a list of indices for the total computable functions. A proof of this fact can be found in \cite{jockusch1972degrees}. Using this, we simply enumerate all the computable functions. Then to define $\pi_n$, for each finite string $f\in\omega^n$, we go through the list of computable functions $\{\varphi_e\}$ until we find one such that $\varphi_e\restrict n=f$. Then we define $\pi_n(f) = \varphi_e(n)$. This predictor is computable in $A$ and predicts all computable functions. \end{proof}

\begin{lemma}\label{meager} For any predictor $P$, there 
is an effectively-in-$P$ meager set covering all functions predicted by $P$.
\end{lemma}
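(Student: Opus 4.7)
The plan is to exhibit, uniformly in $P$, a countable family of closed nowhere dense subsets of $\omega^\omega$ whose union contains every function predicted by $P$. By definition, $x$ is predicted by $P=(D,\pi)$ exactly when there is some threshold $k$ such that $\pi_n(x\restrict n)=x(n)$ for every $n\in D$ with $n\geq k$. Accordingly, for each $k\in\omega$ I set
\[
F_k=\{x\in\omega^\omega : (\forall n\in D)\,(n\geq k\to \pi_n(x\restrict n)=x(n))\},
\]
so that the set of functions predicted by $P$ is precisely $\bigcup_{k\in\omega}F_k$.

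Next I would verify the two properties required of each $F_k$. First, $F_k$ is closed: failure of the defining condition is witnessed at a single index $n\in D$, so the complement is open, and in fact $F_k$ is $\Pi^0_1(P)$ uniformly in $k$. Second, $F_k$ is nowhere dense: given any $\sigma\in\omega^{<\omega}$, choose $n\in D$ with $n\geq\max(k,|\sigma|)$ (which is possible since $D$ is infinite and $P$-computable), and extend $\sigma$ to a string $\tau$ of length $n+1$ satisfying $\tau(n)\neq\pi_n(\tau\restrict n)$. Any $x\in[\tau]$ then witnesses $\pi_n(x\restrict n)\neq x(n)$ for this particular $n\geq k$ in $D$, so $[\tau]\cap F_k=\emptyset$. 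Hence every basic open set $[\sigma]$ contains a basic open subset disjoint from $F_k$.

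Finally, the construction is effective in $P$ throughout, since a $P$-index for $F_k$ as a closed set can be written down uniformly from $P$ and $k$ using the computability of $D$ and of each $\pi_n$ from $P$. Thus $M=\bigcup_k F_k$ is the desired effectively-in-$P$ meager set covering all functions predicted by $P$. I do not anticipate a genuine obstacle here; the only subtlety is ensuring that the enumeration of closed nowhere dense sets is uniform in $(k,P)$, which follows immediately from the definition of a $P$-computable predictor.
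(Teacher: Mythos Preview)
Your proof is correct and follows essentially the same approach as the paper. The only cosmetic difference is that the paper indexes its nowhere dense pieces by the number of errors, setting $C_i=\{f:|\{n\in D:\pi(f\restrict_n)\neq f(n)\}|<i\}$, whereas you index by a threshold beyond which no errors occur; both decompositions yield the same union and the same verification that each piece is closed, nowhere dense, and $\Pi^0_1(P)$.
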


\begin{proof}The collection 
\[C_i = \{f:|\{n\in D:\pi(f\restrict_n)\neq f(n)\}|<i\}\]
is nowhere dense and $\Pi^0_1$ in $P$, and the collection of functions predicted by $P$ is exactly $\bigcup\limits_{i\in\mathbb{N}}C_i$.
\end{proof}

\begin{theorem}\label{predmeagerengulf}If $A$ is a prediction degree, then $A$ is
weakly meager engulfing.\end{theorem}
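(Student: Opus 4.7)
The plan is essentially to concatenate the hypothesis with Lemma \ref{meager}. Since $A$ is of prediction degree, by definition there is a predictor $P\leq_T A$ that predicts every computable function. Apply Lemma \ref{meager} to this particular $P$: this produces an effectively-in-$P$ (and hence effectively-in-$A$) meager set, presented as the $F_\sigma$ union $\bigcup_{i\in\mathbb{N}}C_i$ of nowhere dense $\Pi^0_1(P)$ classes, which covers every function predicted by $P$. Since every computable function is predicted by $P$, this meager set covers every computable function.

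The only step that requires any care is matching the object produced by Lemma \ref{meager} against the precise definition of weakly meager engulfing used in the Rupprecht/BBTNN framework, namely that $A$ is weakly meager engulfing if $A$ computes an $F_\sigma$ meager set containing every computable real (the effective translation of $\mathrm{non}(\mathcal{M})$). The set $\bigcup_i C_i$ has exactly this form, with each $C_i$ being $\Pi^0_1(A)$ and nowhere dense, so the definition is met verbatim.

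I expect no real obstacle here; the substantive work was already done in Lemma \ref{meager}, and the theorem is essentially a bookkeeping step that records the consequence of that lemma under the prediction-degree hypothesis. If one wished to strengthen the theorem (for instance, to show meager engulfing rather than weakly meager engulfing) one would need a different argument, but as stated the proof should be a single short paragraph.
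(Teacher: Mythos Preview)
Your proposal is correct and matches the paper's proof almost verbatim: both invoke Lemma \ref{meager} on the $A$-computable predictor $P$ to obtain an $A$-effective $F_\sigma$ meager set covering all predicted functions, hence all computable ones. The only cosmetic difference is that the paper explicitly restricts attention to $0,1$-valued computable functions so that the meager set lives in $2^\omega$ and covers all computable reals, whereas you leave this transition implicit; you may want to add a clause to that effect.
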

\begin{proof}Assume $A$ is a prediction degree, then there is a predictor $P$ computable from $A$ which predicts all computable functions. In particular, we just need a predictor which predicts all $0,1$-valued computable functions. 

Then, by Lemma \ref{meager} one can, using $P$, effectively find a meager set covering every function predicted by $P$. Thus there is an $A$-effectively meager set covering all $0,1$-valued computable functions, and hence covering all computable reals, as desired.
\end{proof}

\begin{theorem}\label{pred1gen}If $A\in2^{\omega}$ is of prediction degree, then $A$ computes a weakly 1-generic.\end{theorem}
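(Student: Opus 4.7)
The plan is to reduce to Kurtz's classical theorem that a Turing degree computes a weakly 1-generic real if and only if it is of hyperimmune degree—an equivalence which Figure~\ref{computablecichonwithevasion} reflects by identifying ``Computes a weak 1-gen'' with ``Hyper-immune degree.'' So it suffices to exhibit an $A$-computable function $f:\omega\to\omega$ that is not dominated by any total computable function.

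Fix a predictor $P=(D,\langle\pi_n\rangle)\leq_T A$ predicting every computable function. The essential tool is that for every total computable $g$, the predictor agrees with $g$ on a cofinite subset of $D$: $\pi_n(g\restrict n)=g(n)$ for all but finitely many $n\in D$. Hence, for any fixed computable $g$, the $A$-computable function $n\mapsto \pi_n(g\restrict n)+1$ strictly exceeds $g$ on an infinite subset of $D$. The task is to synthesize these per-function diagonalizations into a single $A$-computable $f$ that escapes every computable dominator.

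I propose a dovetailing construction: letting $\{\varphi_e\}$ enumerate the partial computable functions, set
\[
f(n)\;=\;1\;+\;\max\bigl\{\pi_n(\varphi_{e,s}\restrict n)\,:\,e,s\leq n,\ n\in D,\ \varphi_{e,s}\restrict n\downarrow\bigr\},
\]
with $f(n)=0$ when the maximum ranges over the empty set. Then $f$ is total and $A$-computable. To verify it is not dominated by a given total $g=\varphi_{e_0}$: for infinitely many $n\in D$, the index $e_0$ fits in $[0,n]$, some observed computation of $\varphi_{e_0}\restrict n$ completes within a suitable budget, and $P$'s prediction of $g$ at $n$ is correct; for such $n$, $f(n)\geq g(n)+1>g(n)$.

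The main technical obstacle is ensuring that, for a given $g=\varphi_{e_0}$, the pair $(e_0,T(n))$—with $T$ the computation-time function of $g$—enters the indexed family at infinitely many stages $n$, even when $T(n)$ grows faster than $n$. The natural fix is to reorganize the dovetailing so that each completed computation $\varphi_{e_0}\restrict n$ contributes to $f$ at whichever stage $m\geq T(n)$ the computation is first observed to terminate, rather than being discarded when $T(n)>n$; combined with a monotone packaging of the output (or a tailored pairing function placing $(e_0,T(n))$ in the indexed family for infinitely many stages), this guarantees the requisite contribution occurs cofinitely often in $D$. Once $f$ is shown to be of hyperimmune degree, Kurtz's theorem finishes the proof that $A$ computes a weakly 1-generic real.
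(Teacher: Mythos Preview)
Your reduction to Kurtz's theorem is exactly the one the paper uses, but the direct construction of an $A$-computable escaping function has a real gap that your suggested fixes do not close.

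The obstacle is precisely the one you flag: for a total $g=\varphi_{e_0}$, the running time $T(n)$ of $g\restrict n$ may exceed $n$ for all large $n$, so the term $\pi_n(g\restrict n)$ never enters your maximum at stage $n$. Your proposed repair---let the completed computation of $g\restrict n$ contribute at the later stage $m\geq T(n)$ at which it is first observed---yields only $f(m)\geq \pi_n(g\restrict n)+1=g(n)+1$, which says nothing about $f(m)$ versus $g(m)$. Since $g$ may be strictly increasing and $m>n$, this gives no non-domination at $m$. The ``monotone packaging'' and ``tailored pairing'' hints do not change the picture: any scheme that attaches the value $\pi_n(g\restrict n)$ to a stage $m\geq T(n)$ rather than to stage $n$ itself runs into the same mismatch, and you cannot bound $T(n)$ by anything $A$-computable without already knowing $A$ is hyperimmune.

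The paper sidesteps this by arguing the contrapositive. Assume $A$ is hyperimmune-free and let $P=(D,\langle\pi_n\rangle)\leq_T A$. The $A$-computable two-variable function
\[
f(n,k)=\begin{cases}\max\{\pi_n(\sigma):\sigma\in k^n\}&n\in D\\0&\text{otherwise}\end{cases}
\]
is then dominated by some computable $g(n,k)$, and the computable function $x(n)=g\bigl(n,\,1+\max_{p<n}x(p)\bigr)$ evades $P$: with $k=1+\max_{p<n}x(p)$ one has $x\restrict n\in k^n$, hence $x(n)=g(n,k)>f(n,k)\geq\pi_n(x\restrict n)$ for every $n\in D$. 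The key move is that hyperimmune-freeness converts the $A$-computable bound $f$ into a computable one; your direct approach has no substitute for this step.
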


By a result of Kurtz \cite{kurtz1983notions}, we know that computing a weakly $1$-generic is equivalent to compute a hyperimmune function. Then, we will actually prove the equivalent statement that if $A$ is a prediction degree, then $A$ has hyperimmune degree.

This is an analog of the characteristic inequality $\mathfrak{e}\leq\mathfrak{d}$. The above theorem is the analog of the strictly stronger cardinal relation $\mathfrak{e}\leq\mbox{cov}(\mathcal{M})$. However, these notions are one of the places where a relationship that is separable in the set-theoretic case collapses in the computability-theoretic analog, so the theorems are equivalent. The proof follows one of Blass from \cite{blass1994cardinal}.
\begin{proof}Given $A\in2^{\omega}$ which is not weakly 1-generic, by a result of Kurtz, $A$ is hyperimmune-free. In particular we will use the fact that for all $f\leq_T A$ with $f:\omega\times\omega\rightarrow\omega$, there is a function $g\leq_T0$ such that $g>f$.

Let $P=(D_P,\{\pi_n\})\leq_T A$ be a predictor, and define $f:\omega\times\omega\rightarrow\omega$ by 
\[f(n,k)=\begin{cases}\max\left\{\pi_n(t):t\in k^n\right\}&\mbox{if }n\in D_P\\0&\mbox{otherwise.}\end{cases}\]
We note that $f\leq_T A$. Then, by assumption, there is a computable function $g$ such that $g(n,k)>f(n,k)$ for all $n,k$. Then we define \[x(n)=g(n,1+\max\{x(p):p<n\}).\] Now, let $n\in D_{P}$ and $k=1+\max\{x(p):p<n\}$. We note that $x\restrict_n$ is of length $n$ and has all values less than $k$, and so is an admissible $t$ from the definition of $f(n,k)$, so $f(n,k)\geq\pi_n(x\restrict_n)$. On the other hand, by definition of $x$ and the choice of $g$, we also have $x(n)= g(n,k)>f(n,k)$. Thus, we have $x(n)>\pi_n(x\restrict_n)$. Since $n$ was arbitrary, it follows that $x$ evades $P$, and so $A$ is not a prediction degree.
\end{proof}





\begin{theorem}\label{predschnorr}There is an $A\in2^{\omega}$ which computes a Schnorr Random but is not of prediction degree.\end{theorem}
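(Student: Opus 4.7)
The plan is to construct $A$ by forcing with $\Pi^0_1$ classes of positive computable measure, so that the generic $A$ is Martin-L\"of random---hence Schnorr random, and thus a Schnorr random that $A$ itself computes---while for each index $e$ we meet the requirement
\[
R_e:\ \Phi^A_e \text{ is not a predictor that predicts every computable function.}
\]
If measure losses across the stages sum to less than $\mu(\mathcal{P}_0)/2$, then $\bigcap_e \mathcal{P}_e$ has positive measure and contains an ML-random $A$ satisfying every $R_e$, so $A$ is not of prediction degree.

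The refinement for $R_e$ splits into two cases. If some configuration $(n_0,\sigma_0)$ makes the $\Pi^0_1$ set $\{A \in \mathcal{P}_e : \Phi^A_e(n_0,\sigma_0)\uparrow\}$ have positive measure, refine $\mathcal{P}_e$ to it. Otherwise $\Phi^A_e$ is total on a full-measure subset of $\mathcal{P}_e$, and we build a computable $f$ evading $\Phi^A_e$. Recursively pick $f(n)$ to be any value $k$ with $\mu(\{A \in \mathcal{P}_e : \Phi^A_e(n, f \restrict n) = k\}) < 2^{-n-1}\mu(\mathcal{P}_e)$: such $k$ exists because the measures in question sum to at most $\mu(\mathcal{P}_e)$, and in the total case the lower-semicomputable partial sums $\sum_{k\leq K} \mu_k$ increase to $\mu(\mathcal{P}_e)$, so one may effectively pick $k$ as any index exceeding the $K$ at which these partial sums first come within $2^{-n-1}\mu(\mathcal{P}_e)$ of the total. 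The agreement events $E_n = \{A : \Phi^A_e(n, f \restrict n) = f(n)\}$ are then uniformly $\Sigma^0_1$ with $\mu(E_n \cap \mathcal{P}_e) < 2^{-n-1}\mu(\mathcal{P}_e)$, so $\{U_m := \bigcup_{n \geq m} E_n\}$ is a Martin-L\"of test relative to $\mathcal{P}_e$. Since $A$ is ML-random in $2^\omega$ and $\mathcal{P}_e$ has computable measure, $A$ is ML-random relative to $\mathcal{P}_e$; thus $A$ avoids $\bigcap_m U_m$, meaning $A$ agrees with $f$ only finitely often, so $f$ evades $\Phi^A_e$.

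The main obstacle is that the case distinction (positive-measure divergence versus almost-everywhere totality) is not effectively decidable, totality being a $\Pi^0_2$ condition. This is resolved by dovetailing the two strategies: simultaneously enumerate configurations searching for positive-measure divergence while building the candidate $f$ from accumulating lower-semicomputable measure data, committing to whichever succeeds first at each level $n$. Careful bookkeeping---maintaining each $\mathcal{P}_e$ of computable measure via clopen refinements inside a fixed universal $\Pi^0_1$ class of ML-randoms---ensures the construction is uniformly effective and the measure losses summable, so the final $A$ computes a Schnorr random while failing to be a prediction degree.
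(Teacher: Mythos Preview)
The paper's proof is a one-liner: by Theorem~\ref{pred1gen} every prediction degree is hyperimmune, and it is a standard fact (cited from \cite{bbtnn}) that there is a hyperimmune-free degree computing a Schnorr random. Any such $A$ works immediately. Your forcing construction attempts to reprove this from scratch, and while the overall shape is reasonable, there are real gaps.

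The main problem is the measure accounting in your Case~1. If some $(n_0,\sigma_0)$ has positive-measure divergence, you refine $\mathcal{P}_e$ to the $\Pi^0_1$ class $\{A\in\mathcal{P}_e:\Phi^A_e(n_0,\sigma_0)\uparrow\}$, but this class may have arbitrarily small measure. Nothing you have written bounds the loss, so after infinitely many such refinements $\bigcap_e\mathcal{P}_e$ may well be empty; the phrase ``careful bookkeeping \dots\ ensures \dots\ the measure losses summable'' is exactly the point that needs an argument and does not receive one. Your dovetailing paragraph does not repair this: at level $n$ you propose to wait for either the $\Sigma^0_1$ event ``partial sums cross the threshold'' or the event ``positive-measure divergence,'' but the latter is a $\Pi^0_1$ statement about the measure of a $\Pi^0_1$ class and cannot be confirmed at any finite stage, so there is nothing for the dovetail to commit to. A secondary issue is that partiality of $\Phi^A_e$ at a single $(n_0,\sigma_0)$ does not by itself rule out $\Phi^A_e$ coding a valid predictor, since a predictor only needs $\pi_n$ total for $n\in D$; your Case~1 would need to engage with how $D$ is coded.

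Even if these gaps can be closed, you are working much harder than necessary: the implication ``prediction degree $\Rightarrow$ hyperimmune'' (proved just above in the paper, effectivizing Blass's argument that $\mathfrak{e}\le\mathfrak{d}$) together with the hyperimmune-free basis theorem applied to the complement of a universal Martin-L\"of test gives the result in two lines.
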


\begin{proof}This follows immediately from the fact that there is $A\in2^{\omega}$ which computes a Schnorr Random, but is hyperimmune-free. See, e.g. \cite{bbtnn} \S 4.2 (2).\end{proof}

The next theorem is an effectivization of the proof of the consistency of $\mathfrak{b}<\mathfrak{e}$ done by Brendle and Shelah in \cite{brendle2}. In their forcing, in order to show that all functions in the extension are bounded by a ground model function they rely on a claim that is analogous to our claim \ref{hcomp}. The proof of both claims are complicated due to the fact that giving a name (or Turing fuctional, in our case) and a condition that force it to be a function, the forcing condition has continuum many compatible conditions with it, nevertheless, we want to encode the maximums of all possible minimum values that the function can take in only countably many functions (in our claim, the set of these functions is called a seer). In both cases, it is shown that the essential information of each condition only depends on a finite part of it.

It is important to say that, for our version of the theorem, we face two extra problems. First, the Turing functional that we are working with might not be total. Second, in order to keep using computable information, we need to find extensions with functions that are either computable or hyperimmune-free. To solve both problems, we rely on the hyperimmune-free basis theorem over a specific compact space.

\begin{theorem}\label{predhighsep}There is an $A$ which is of prediction degree but it is not high.\end{theorem}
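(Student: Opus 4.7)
The reduction first: by Martin's theorem, $A$ is of high degree if and only if $A$ computes a function dominating every computable function, and highness is closed upward in the Turing degrees; hence ``$A$ does not compute any high $B$'' is equivalent to ``$A$ itself is not high'', i.e.\ ``no function computable from $A$ dominates every computable function''. Our plan is to build a predictor $P$ that predicts every total computable function but for which no $P$-computable function dominates every computable function, and then set $A = P$.

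The construction is a forcing-style argument carried out inside an effectively compact space $X$ whose elements are pairs $(P, h)$, where $P = (D, \pi)$ is a predictor with its $\pi$-values restricted to a prescribed computable envelope and $h : \omega \to \omega$ is a ``witness function'' (also envelope-bounded) certifying that for every $e$ with $\varphi_e$ total, $|\{n \in D : \pi_n(\varphi_e \restrict n) \neq \varphi_e(n)\}| \leq h(e)$. The role of $h$ is to turn ``$P$ predicts every total computable function'' into a $\Pi^0_1$ statement on pairs in $X$ rather than a $\Pi^0_2$ statement on $P$ alone, and the computable envelope that bounds $\pi$ and $h$ is the countable coding of maximum values that parallels the key step in the Brendle--Shelah proof and makes $X$ compact. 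We then construct a decreasing sequence of nonempty $\Pi^0_1$ subclasses $C_0 \supseteq C_1 \supseteq \ldots$ of $X$, dovetailing prediction requirements (refining to pairs whose finite $\pi$-commitments are consistent with predicting the next $\varphi_e$ within the bound $h(e)$) and anti-domination requirements: for each Turing functional $\Phi_e$, either there exist a $\Pi^0_1$ refinement of $C_s$ and an input $n$ on which $\Phi_e^P(n) \uparrow$ for every $(P, h)$ in the refinement, in which case we take that refinement and make $\Phi_e^P$ partial, or else $\Phi_e^P$ is total for every $(P, h) \in C_s$, in which case by effective compactness of $X$ the function $g(n) = 1 + \max_{(P, h) \in C_s} \Phi_e^P(n)$ is total and computable and $\Phi_e^P$ fails to dominate $g$ for every such $P$. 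Taking any $(P, h) \in \bigcap_s C_s$ and setting $A = P$ then delivers the desired degree.

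The main obstacle is keeping the nested classes simultaneously nonempty and effectively $\Pi^0_1$-presented: the prediction refinements must leave room for each $\Phi_e^P$ to be examined by the compactness argument, and the search for a $\Pi^0_1$ subclass on which $\Phi_e^P(n) \uparrow$ must be carried out using only computable information even though $\Phi_e^P$ may be partial in unpredictable ways. This is precisely the role of the hyperimmune-free basis theorem in the specific compact space flagged in the discussion preceding the theorem: it is invoked not on $X$ itself to force the final $A$ to be hyperimmune-free (which is impossible, since prediction degree implies hyperimmune degree by Theorem \ref{pred1gen}) but inside the construction at each stage, so that the intermediate $\Pi^0_1$-refinements and their witness data remain computable, thereby validating the compactness computation of $g$ in the anti-domination step and allowing the overall argument to avoid any appeal to enumerating the total computable functions.
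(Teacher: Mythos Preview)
Your proposal has a genuine gap at its foundation: no effectively compact space of predictors can contain a predictor that predicts every total computable function. If your envelope bounds $\pi_n(\sigma)\le b(\sigma)$ for some computable $b$, then the recursively defined computable function $f(n)=b(f\restrict n)+1$ satisfies $\pi_n(f\restrict n)\le b(f\restrict n)<f(n)$ for every $n$ and every $P=(D,\pi)$ in your space $X$; hence $f$ is mispredicted at \emph{every} point of $D$, so no pair $(P,h)\in X$ with $D$ infinite can have $|\{n\in D:\pi_n(f\restrict n)\neq f(n)\}|\le h(e_f)$, and your $\Pi^0_1$ class of ``good pairs'' is empty from the outset. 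This is essentially the argument behind Theorem~\ref{pred1gen}: a predictor whose values are computably bounded cannot be of prediction degree at all. No adjustment of the witness function $h$ repairs this, since the number of errors on this particular $f$ is infinite regardless of how large $h(e_f)$ is allowed to be.

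The paper's proof avoids this obstruction by never placing the predictor inside a compact space. It forces instead with finite conditions $(d,\pi,F)$, where $F$ is a finite side-set of functions (to which the total $\varphi_e$'s are added one at a time) whose join is kept hyperimmune-free throughout. Compactness and the hyperimmune-free basis theorem are applied not to the predictor but to auxiliary effectively compact spaces of tuples of \emph{functions to be predicted}: Claim~\ref{hcomp} builds, by induction on $|F|$, a uniformly $\Sigma^{0,\bigoplus F_s}_1$ family of ``seer'' functions $h^e_{d,\pi,\bar f^\ast}$ bounding the least value one can force $\varphi_e^{\langle d',\pi'\rangle}(n)$ to take below an extension of $(d,\pi,F_s\cup\hat F)$. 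Because $\bigoplus F_s$ is hyperimmune-free, these seers are dominated by a single computable $h^e$, which then witnesses non-domination by $\varphi_e^P$. The hyperimmune-free basis theorem enters when divergence must be forced: the new functions adjoined to $F$ are selected from a $\Pi^{0,\bigoplus F_s}_1$ class so that $\bigoplus F$ remains hyperimmune-free. Thus the compactness lives on the side of the \emph{predicted} functions, never on the predictor itself---precisely because, as your own envelope argument inadvertently shows, the latter is impossible.
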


\begin{proof}We will force with conditions $\langle d,\pi,F\rangle=p\in \mathbb{P}$ where $d\in2^{<\omega}$ is a finite partial function, $\pi = \{\pi^n:n\in d\}$ and $\pi^n:\omega^n\rightarrow\omega$ is a finite partial function, $F\subset\omega^{\omega}$ is a finite collection of functions with the property $f,g\in F, f\neq g\Rightarrow f\restrict_{|d|}\neq g\restrict_{|d|}$. Here, the $d$ and $\pi$ can be thought of as partial approximations of $D$ and $\pi$ in the eventual predictor we are constructing, and $F$ as the collection of functions that we are committed to predicting correctly for the rest of the construction.

We define $(d',\pi',F')$ as an extension of $(d,\pi,F)$ by
\begin{align*}(d',\pi',F')\leq(d,\pi,F)\iff&d'\supset d, \pi'\supset\pi,F'\supset F\mbox{ and}\\&f\in F,n\in\mbox{dom}(d')\setminus\mbox{dom}(d)\Rightarrow \pi^{\prime}(f\restrict_n)=f(n).\end{align*}

Due to the use of various indexes it is important to make a comment on notation:
\begin{itemize}
\item We will identify $d$ with a finite subset of $\omega$, so $m\in d$ means that $d(m)=1$. Formally, we are using the function since it is important that we are able to decide both the elements that are in and outside of $d$. 

\item Given $q$ a forcing condition, we will express it as $q=\langle ^{q}d, ^{q}\pi, ^{q}F\rangle$, unless otherwise stated.

\item We will do a construction by stages, so the condition that is selected at each stage $s$ will be $p_{s}=\langle d_{s}, \pi_{s}, F_{s} \rangle$.

\item $\pi^{m}$, as mentioned above, will denote the function of $\pi$  corresponding to $m\in d$ . We will not use the superscript for anything else. Also, if there is no confusion, we will denote $\pi^{m}$ as $\pi$.

\item The left subscript will only be use in case we need to enumerate something, in that case $_{i}q=\langle _{i}d,\, _{i}\pi,\, _{i}F \rangle $.
\end{itemize}

To initialize the construction, we let $d_0=\langle\rangle,\pi_0=\{\},F_0=\{\}$. We will maintain the property that the joint $\bigoplus F_s = \bigoplus\limits_{f\in F_s}f$ is hyperimmune-free and we will extend by the following rules:

$Q_e$: The goal of this requirement will be to ensure that we predict $\varphi_e$.

At stage $s=3e$, we simply set $F_s=F_{s-1}\cup\{\varphi_e\}$ and $d_s=d_{s-1}\concat 0^n$ with $n$ least such that for $f\in F_{s-1}$, if $\varphi_e\neq f$, then $\varphi_e\restrict_{|d_s|}\neq f\restrict_{|d_s|}$. Additionally, if $m\in d_{s-1}\subseteq d_{s}$ and $\pi_{s-1}(\varphi_e\restrict_m)$ is undefined, we define  $\pi_{s}(\varphi_e\restrict_m)$ to be $\varphi_e(m)$.

$I_e$: The goal of this requirement is to ensure that $D$ is  infinite.

At stage $s=3e+1$, $d_s=d_{s-1}\concat1$, and $\pi_s=\pi_{s-1}\cup\{\pi^m\}$ with $m=|d_{s-1}|$ where $\pi^m:\omega^m\rightarrow\omega$ with $\pi^m(f\restrict m)=f(m)$ for all $f\in F_{s-1}$, $\pi^m(\sigma)$ undefined for all other $\sigma$, and $F_s=F_{s-1}$.

$E_{e,n}$: The goal of this requirement is to avoid that, once we finish the construction, the predictor $P= \langle\bigcup d_s,\bigcup\pi_s\rangle$ is of high degree. To do this, we will ensure that either $\varphi_e^P$ is not total or that there is a computable function $h^{e}$ such that $\exists^{\infty}n(\varphi_e^A(n)\leq h^{e}(n))$.

In order to create the function $h^{e}(n)$, we have to make a guess depending on every forcing extension below $p_{s}$. Because of that we define:

\begin{definition}
A collection of functions $S$ indexed by $d,\pi,\overline{f}^{\ast}$, where $d,\pi$ are as in $\mathbb{P}$ and $\overline{f}^{\ast}$ is a finite sequence of finite initial segments of functions, is a \emph{seer for $\varphi_{e}$ at stage s} if and only if for any collection $\hat{F}$ of total functions extending  $\overline{f}^{\ast}$, the forcing condition $\langle d,\pi,F_s\cup \hat{F}\rangle$ can be extended by $q=\langle ^{q}d,\, ^{q}\pi,\, ^{q}F\rangle$ such that $\varphi_{e}^{\langle ^{q}d,\, ^{q}\pi\rangle}$ is bounded by an specific function in $S$. Syntactically, the set
\small{\begin{align*}S=\left\{h^e_{d,\pi,\overline{f}^{\ast}}\in\omega^\omega:
\begin{array}{c}
\langle d,\pi,F_s\rangle\leq\langle d_s,\pi_s,F_s\rangle,
\overline{f}^{\ast}=\langle f_{i}^{\ast}\rangle, |\overline{f}^{\ast}|=l\in\omega,f_i^{\ast}\in\omega^{|d|}\\
\mbox{ are distinct and }\forall f\in F_s,f_i^{\ast}\in\overline{f}^{\ast}, f_i^{\ast}\neq f\restrict_{|d|}
\end{array}
\right\}\end{align*}}
\normalsize is a \emph{seer for $\varphi_{e}$ at stage $s$} if and only if for all its elements and for all $n\in \omega$
\begin{align*}h^{e}_{d,\pi,\overline{f}^{\ast}}(n)\geq\min\{m&:\forall p=\langle d,\pi,F_{s}\cup\hat{F}\rangle\mbox{ with }\hat{F}=\{f_i\in\omega^{\omega}\}_{i<l}\mbox{ and }\\
&\ \ f_{i}\restrict_{|d|}=f_{i}^{\ast}\ (\exists q\leq p \ \ \varphi_e^{\langle ^{q}d,\, ^{q}\pi\rangle}(n)\downarrow<m)\}.\end{align*}
\end{definition}

\begin{claim}\label{hcomp}We claim that either\begin{enumerate}[(1)]
\item There is $n\in \omega$ and $p\leq \langle d_{s}, \pi_{s}, F_{s}\rangle$ such that for any $q\leq p$, $\varphi_e^{\langle ^{q}d,\, ^{q}\pi\rangle}(n)\uparrow$ and $\bigoplus\, ^{p}F$ is hyperimmune free

or,

\item There is a uniformly $\Sigma^{0,\bigoplus F_s}_1$  collection of functions indexed by $d,\pi,\overline{f}^{\ast}$ that is a seer for $\varphi_{e}$ at stage $s$.

\end{enumerate}\end{claim}

At stage $s=3\langle e,0\rangle+2$, we will use the above claim in the following way:

If (1), then we define $\langle d_{s+1}, \pi_{s+1}, F_{s+1}\rangle$ to be such a $p$ and we do nothing for stages of the form $s=3\langle e,n+1\rangle+2$. This will make $\varphi_{e}^{\langle d,\pi\rangle}$ not total.

If (2), then we can find a seer for $\varphi_{e}$ at stage $3\langle e,0\rangle+2$. This set is countable and it is indexed in a computable way, then we can find $\widehat{h}^e\leq_T\bigoplus F_{3\langle e,0\rangle+2}$ such that $h^e_{d,\pi,\overline{f}^{\ast}}\leq^{\ast}\widehat{h}^e$ for all such functions. However, since $\bigoplus F_{3\langle e,0\rangle+2}$ is hyperimmune-free, it follows that there is a computable function $h^e$ for which $(\forall n)h^e(n)\geq\widehat{h}^e(n)$. We then resume the construction.

At stage $s=3\langle e,n+1 \rangle+2$ we can find $j> n$ so that $h^{e}(j)\geq h^e_{d_{s}, \pi_{s}, \overline{f}^{\ast}}(j)$ where $\overline{f}^{\ast}$ are the restrictions of the functions in $F_{s}\setminus F_{3\langle e,0\rangle+2}$ to $|d_{s}|$ and such that $\varphi_e^{\langle d_s,\pi_s\rangle}(j)$ is not yet defined.

In this situation, we can find $p_{s+1}=\langle d_{s+1}, \pi_{s+1}, F_{s+1}\rangle$ such that \[\varphi_e^{\langle d_{s+1},\pi_{s+1}\rangle}(j)\downarrow \leq h^e_{d_{s}, \pi_{s}, \overline{f}^{\ast}}(j)\leq h^{e}(j).\]  $\bigoplus F_{s+1}$ may not be hyperimmune-free, however, notice that the convergance of $\varphi_e^{\langle d_{s+1},\pi_{s+1}\rangle}(j)$ only depends on finite initial segments of the members of $F_{s+1}\setminus F_s$, and so there actually is such a condition with $\bigoplus F_{s+1}$ hyperimmune-free. We pick a condition with this property.

Verification: By construction, the predictor $P=\langle\bigcup d_s,\bigcup\pi_s\rangle$ has the desired properties. $Q_e$ ensures our predictor predicts all computable functions, $I_e$ ensures that $\bigcup d_s$ is infinite, and $E_{e,n}$ ensures that the computational strength of the predictor cannot compute a total function dominating the computable functions, specifically, $h^{e}\not\leq^{\ast}\varphi_{e}^{P}$, so $P$ is not high.\end{proof}

Proof of Claim \ref{hcomp}:

\begin{proof}

Before doing the technical work to show the claim, we will explain the idea of the upcoming proof. As we see above, we want -- if possible -- to define the function $h^{e}_{d,\pi,\overline{g}^{\ast}}$ in such a way that, given $\langle d,\pi, F_{s}\cup G\rangle \leq \langle d_s,\pi_s,F_s\rangle$ with $G=\{g_i: i<l+1\}$, $\overline{g}^{\ast}=\langle g_{i}^{\ast}:i<l+1\rangle $, $g_{i}\restrict_{|d|}=g_{i}^{\ast}$ for all $i<l+1$ and $n\in \omega$ we can find $q\leq \langle d,\pi, F_{s}\cup G\rangle$ such that $\varphi_{e}^{\langle ^{q}d, ^{q}\pi\rangle}(n)$ is smaller than $h^{e}_{d,\pi,\overline{g}^{\ast}}(n)$. In other words, $h^{e}_{d,\pi,\overline{g}^{\ast}}(n)$ represents the minimal value that we can force $\varphi_{e}^{\langle D, \pi\rangle}(n)$ to take given that we already committed to $d,\pi,\overline{g}^{\ast}$.

In order to do this, we try to find all the possible extensions $q$ of the node $\langle d,\pi, F_{s}\cup G\rangle$ that make $\varphi_{e}^{\langle ^{q}d,\, ^{q}\pi\rangle}$ take the smallest possible value. There may be infinitely many of these nodes, which is problematic since, a priori, there might not be a computable way (from $\bigoplus F_s$) to find this minimun, nevertheless, we want to keep the function computable from $\bigoplus F_{s} $. To find only finitely many possible nodes in a $\bigoplus F_s$ computable way, we will restrict our possible $G$ to a $\bigoplus F_{s}$ computable compact subspace of $\omega^{\omega}$. Notice that making $\varphi_{e}^{\langle ^{q}d,\, ^{q}\pi\rangle}(n)$ converge is an open condition (it is downwards close in the forcing and, as we will see, open in the compact space), so, if these open sets cover the compact subspace, a finite $\bigoplus F_{s}$ computable subcover will give us only finitely many extensions to consider. Furthermore, in compact spaces we can use the hyperimmune-free basis theorem.

The conversion from the whole $\omega^{\omega}$ to a compact space is possible thanks to the following observation about compatibility: for $ q=\langle ^{q}d,\, ^{q}\pi,\, ^{q}F\rangle\leq \langle d,\pi, F\rangle $ and $\langle d,\pi, \{g\}\rangle$ to be compatible it is sufficient (but not necessary) that $g\restrict|d|$ is different from $f\restrict|d|$ for all $f\in F$ and that there is $t< |d|$ such that $g(t)$ is bigger than the $t$th index of all strings in the domain of any function in $^{q}\pi$ (more formally, it is bigger than $\sigma(t)$ for all $\sigma\in dom( ^{q}\pi)$).\footnote{The compatibility is true because $g\restrict t$ is not defined in the domain of $^{q}\pi$, therefore, we can create $\pi'$ which always predicts $g$ correctly after $t$ such that $^{q}\pi\subseteq \pi'$. In this way $\langle ^{q}d, \pi', F\cup \{g\}\rangle$ is below $q$ and $\langle d,\pi, \{g\}\rangle$.} In particular, this observation hints at the possibility of only worrying about functions of certain growth while we are looking for our small convergences.

Returning to more technical work, during our proof, we will ask $h^{e}_{d,\pi,g_{i}^{\ast}}(n)$ to not only be bigger than $\varphi_{e}^{^{q}d,\, ^{q}\pi}(n)$ for a selected $q$, but also to be bigger than the values taken by strings in the domain of functions from $^{q}\pi$. In that way, we make $h^{e}_{d,\pi,g_{i}^{\ast}}(n)$ carry some information of compatibility with those $q$. To define the compact space where we will work, we will define functions $B_{l}$ that combine nicely the information needed.

Finally, we would like to warn the reader about the information inside the $\max$ functions in the proof. We decide to write explicitly the elements that give the properties. Under a more detailed analysis, some of these requirements may be repeated information. Nevertheless, we hope that this decision improves the flow of the reading.

Now, for the proof, we will show this by induction on $l=|\overline{f}^{\ast}|$. Our induction hypothesis is slightly stronger than the statement of the claim. Case $(1)$ remains unchanged, but we add to case $(2)$ the additional requirement: \begin{enumerate}
\item[(2a)]For all $n, l\in \omega$ there is a function $B_{l}^{n}:\omega\rightarrow\omega$ such that for all $f\in\omega^\omega$ for which there is $j$ such that $f(j)>h^e_{d,\pi,\overline{f}^{\ast}}(n)$ with $j<|d|$, there is $r$ such that $\varphi_{e}^{\langle ^{r}d,\, ^{r}\pi\rangle}(n)\downarrow<h^e_{d,\pi,\overline{f}^{\ast}}(n)$ and such that it is compatible with $\langle d,\pi,F_s\cup\{f\}\cup\widehat{F}\rangle$ with  $\widehat{F}=\{f_i\in\omega^{\omega}\}_{i<l}$, $f_i\restrict|d|=f_{i}^{\ast}$ and $f_{i}(s)\leq B_{l}^{n}(s)$ for all $s\geq |d|$, $i<l$. Furthermore, $r$ do not depends on $f$ (but may depend on $j$, $\hat{F}$ and $n$).\end{enumerate}

\ 

\noindent\textbf{Case $l=0$:}

If there is $n\in \omega$ and $\langle d, \pi, F_{s}\rangle \leq \langle d_{s}, \pi_{s}, F_{s}\rangle$ such that for all $q$ extending $ \langle d, \pi, F_{s}\rangle$ we have that $\varphi_{e}^{\langle ^{q}d,\, ^{q}\pi\rangle}(n)$ diverges then $\langle d, \pi, F_{s}\rangle$ satisfies $(1)$. Otherwise, fix $\langle d,\pi, F_{s}\rangle \leq \langle d_s,\pi_s,F_s\rangle$. We will define a function $h^{e}_{d, \pi, \emptyset}$ computable from $\bigoplus F_{s}$ with the desired properties.

Fix $n\in \omega$. We begin searching for extensions $q\leq\langle d,\pi,F_s\rangle$ with $\varphi^{\langle ^{q}d,\, ^{q}\pi\rangle}_{e}(n)\downarrow$. As soon as we find a convergence to a value $m$, we let 
\[h^{e}_{d, \pi, \emptyset}(n)=\max\{m+1,\min\{k:\forall i\in\, ^{q}d\ \forall\sigma\in\mbox{dom}( ^{q}\pi^{i})\ (\sigma\in k^i)\}+1\}.\]

Notice that the first part of the max ensures (2), and the second part, using $B^{n}_{0}$ the constant function $0$, ensures that (2a) is satisfied: assume that $f(j)>h^{e}_{d, \pi, \emptyset}(n)$, $j< |d|$. The only way there is no extension as described in (2a) is if $^{q}\pi$ incorrectly predicts $f$ for some $\ell\in[|d|,|^{q}d|)$. This is impossible since $f(j)$ takes a value which is larger than anything that shows up in the domain of any of the functions from $^{q}\pi$. Then $^{q}\pi$ do not make any prediction for $f$ in $[|d|,|^{q}d|)$, which shows that $q$ and $\langle d,\pi,F_s\cup\{f\}\rangle$ are compatible. Furthermore, this $q$ do not depend on $f$.

\noindent\textbf{Case $l=1$:}

If (1) has already happened, we are done. Otherwise, fix $\langle d,\pi, F_{s}\rangle$ extending $\langle d_s,\pi_s,F_s\rangle$ and $g^{\ast}\in\omega^{|d|}$ such that for all $f\in F_{s}$, $f\restrict_{|d|}\neq g^{\ast}$.

We will define a function $h^{e}_{d, \pi, \langle g^{*}\rangle}$ computable from $\bigoplus F_{s}$ with the desired properties.

Fix $n\in \omega$. Let $h^e_{d,\pi,\emptyset}$ be as in the $l = 0$ case. We define 

\[B_{1}^n (j)=\left\{\begin{array}{lc}
0 & j<|d|\\
\max\left\{\begin{array}{l}B_1^n(j-1),\\ h^{e}_{d', \pi', \emptyset}(n)\end{array}:\begin{array}{l} \langle d',\pi', F_{s}\rangle\leq \langle d,\pi, F_{s}\rangle,\\ d'=d0^{j-|d|}, \pi'=\pi\end{array}\right\} & |d|\leq j
\end{array}\right. .\]

Since $B_{1}^n$ is computable from $\bigoplus F_{s}$ we have that the space 
\[C_{1}^n=\{ f \in\omega^{\omega} \ : \ f\restrict|d|=g^{\ast} \ \& \ \forall j\geq |d|\ f(j)\leq B_{1}^n(j)\}\] 
is effectively compact with respect to $\bigoplus F_{s}$.

Now, we can define open sets in $C_1^n$ representing bounded convergence. We define these sets as
\[U^{n}_{m}=\{h\in C_1^n \ : \ \exists q\leq \langle d, \pi, F_{s}\cup\{ h\}\rangle \ \varphi^{\langle ^{q}d, ^{q}\pi\rangle}_{e} (n)\downarrow < m \}.\]
Notice that $U^{n}_{m}\subseteq U^{n}_{t}$ as long as $m\leq t$, and that $U^{n}_{m}$ is a $\Sigma_1^{0,\bigoplus F_s}$ set of functions.

Furthermore, if we call $A^{n}=\bigcup\limits_{m\in \omega}U^{n}_{m}$, we have that $C_1^n\setminus A^{n}$ is a $\Pi_1^{0,\bigoplus F_s}$ class that can be expressed as follows:
\[\{h\in C_1^n:\forall q\leq \langle d, \pi, F_{s}\cup\{ h\}\rangle \ \varphi^{\langle ^{q}d, ^{q}\pi\rangle}_{e} (n)\uparrow\}.\]

If $C_1^n\setminus A^{n}\neq \emptyset$, using the hyperimmune-free basis theorem\footnote{This theorem is true in (computable) compact spaces but fails in spaces like $\omega^{\omega}$. } over $C_1^n$, we can find an $h\in C_1^n\setminus A^{n}$ which is hyperimmune-free relative to $\bigoplus F_s$, but since this join is hyperimmune-free, it follows that $h$ is hyperimmune-free, and we can satisfy (1) with  $p=\langle d, \pi, F_{s}\cup\{ h\}\rangle$.

Otherwise, $C_1^n=A^{n}=\bigcup_{m\in \omega}U^{n}_{m}$ so, by compactness, there is $m^{\ast}$, which can be found in an effective way from $\bigoplus F_{s}$, such that $C_1^n=U^{n}_{m^{\ast}}$. This $m^{\ast}$ will help us satisfy (2) within $C^{n}_1$. Now, notice that the set of functions in $C^n_1$ that can be added to $F_s$ and have a small convergence using $d',\pi'$, i.e. 
\[O_{d',\pi'}=\{f\in C_{1}^n : \exists \langle d', \pi', F\rangle \leq \langle d, \pi, F_{s}\cup \{f\}\rangle\ \varphi_{e}^{\langle d', \pi'\rangle} (n)\downarrow<m^{\ast}\},\] is open. This set is $\Sigma_1^{0,\bigoplus F_s}$ and we have that 

\[C_{1}^n=U^{n}_{m^{\ast}}=\bigcup_{\langle d', \pi', \emptyset\rangle\in \mathbb{P}}O_{d',\pi'}.\]

By effective compactness we can find $\alpha(1)=\alpha\in \omega$ and  $\langle _{\xi}d,\, _{\xi}\pi\rangle$  for all $\xi\leq \alpha$, such that $C_{1}^n=\bigcup\limits_{\xi=1}^{\alpha}O_{_{\xi}d,\, _{\xi}\pi}$. In other words, this gives us finitely many $\langle _{\xi}d,\, _{\xi}\pi, F_s\cup\emptyset \rangle$ such that for all $f\in C^n_1$ there is $\xi$, $ \langle _{\xi}d,\, _{\xi}\pi, F_s\cup\{f\} \rangle\leq \langle d, \pi, F_{s}\cup \{f\}\rangle$, forcing a convergence less than $m^{\ast}$.

Let 
\small{\[h^{e}_{d, \pi, \langle g^{\ast}\rangle}(n)=\max\left\{\begin{array}{l}m^{\ast}, \max\{B_{1}^n(j):\exists \xi\leq \alpha (j\leq |_{\xi}d |)\}\\\min\{k:\forall \xi<\alpha\forall i\in{} _{\xi}d \forall \sigma\in \mbox{dom}(_{\xi}\pi^{i}) (\sigma\in k^{i}) \}\end{array}\right\}.\]}

\normalsize

We will verify that this function satisfy (2) and (2a).

For (2), given $f\in C_{1}^n$ there is $\xi \leq \alpha$ such that $\langle _{\xi}d,\, _{\xi}\pi, F_s\cup\{f\} \rangle\leq\langle d, \pi, F_{s}\cup \{f\}\rangle$  and $ \varphi_{e}^{\langle _{\xi}d,\, _{\xi}\pi\rangle} (n)\downarrow<m^{\ast}\leq h^{e}_{d, \pi, \langle g^{\ast}\rangle}(n)$. If $f\notin C_1^n$ and $j$ is the first such that  $f(j)> B_1^n(j)$ we have two cases.

In one case, $j\geq |_{\xi}d|$ for all $\xi<\alpha$. In this situation, we have that $\langle d,\pi,F_s\cup \{f\}\rangle$ is compatible with any extension $r$ of $\langle d,\pi,F_s\cup \{f\restrict j^\frown 0^{\infty}\}\rangle$ with $|^{r}d|<j$. Since $f\restrict j^\frown 0^{\infty}\in C^n_1 $, there is some $\xi\leq \alpha$ such that $\langle _{\xi}d,\, _{\xi}\pi, F_s\cup\{f\restrict j^\frown 0^{\infty}\} \rangle\leq \langle d,\, \pi, F_s\cup\{f\restrict j^\frown 0^{\infty}\} \rangle$ with $|_{\xi}d|<j$, so we are done. For the second case, if there is $\xi\leq \alpha$ such that $j<|_{\xi}d|$, then $f(j)> B_1^n(j)> h^{e}_{d', \pi', \emptyset}(n)$ with $\langle d',\pi', F_{s}\rangle\leq \langle d,\pi, F_{s}\rangle$ where $d'=d0^{|_{\xi}d|-|d|}$ (i.e., $d$ follow by a lot of zeros) and  $\pi'=\pi$. Notice that $|d'|=|_{\xi}d|$ and that $\langle d',\pi', F_{s}\cup\{f\}\rangle$ is an extension of $\langle d,\pi,F_s\cup \{f\}\rangle$. Now, from the analysis at the end of case $l=0$ (i.e., using induction hypothesis with property (2a)), we know that there is $r$ compatible with $\langle d',\pi',F_s\cup \{f\}\rangle$ (and then, also with $\langle d,\pi,F_s\cup \{f\}\rangle$) with \[\varphi_{e}^{\langle ^{r}d,\, ^{r}\pi\rangle}(n)\downarrow<h^e_{d',\pi',\emptyset}(n)<B_1^n(|d'|)<B_1^n(|j|)\leq h^{e}_{d, \pi, \langle g^{\ast}\rangle}(n).\]

For (2a), we use the same reasoning as in the case $l=0$ to show that if $f(j)\geq h^{e}_{d, \pi, \langle g^{\ast}\rangle}(n)$, $j\leq |d|$ then $\langle d,\, \pi, F_s\cup\{f\} \rangle$ is compatible with $\langle _{\xi}d,\, _{\xi}\pi, F_s\emptyset \rangle$ for any $\xi\leq \alpha$. Now, we know that given $g\in C^{n}_{1}$, i.e., such that $g(s)\leq B_{1}^{n}(s)$ for $s\geq |d|$, then $\langle d,\, \pi, F_s\cup\{g\} \rangle$ is extended by some $\langle _{\xi}d,\, _{\xi}\pi, F_s\cup\{g\} \rangle=r$. Therefore, $\langle d,\, \pi, F_s\cup\{g\}\cup\{f\} \rangle$ is compatible with that $r$.

\noindent\textbf{Case $l+1$:}

Fix $\langle d,\pi, F_{s}\rangle \leq \langle d_s,\pi_s,F_s\rangle$ and $g_{i}^{\ast}\in\omega^{|d|}$ for $i\in\{0,\dots,l\}$ such that for all $f\in F_{s}$, and all $i<l+1$, $f\restrict|d|\neq g_{i}^{\ast}$ and $g_{i}^{\ast}\neq g_{j}^{\ast}$ if $i\neq j$. Then, by our inductive hypothesis, we have that for all $A\subset\overline{g}^{\ast}$ with $|A|\leq l$, either case (2) and (2a) hold or case (1) holds. If for any such subset, we see that (1) holds, then by definition, (1) holds  of $\overline{f}^{\ast}$, and we are done. Otherwise, we will define a function $h^{e}_{d, \pi, \langle g_{i}^{*}: i<l+1\rangle}$ computable from $\bigoplus F_{s}$ with the desired properties.

Fix $n\in \omega$. Define 
\small{\[B_{l+1}^n(j)=\begin{cases}
0 & j<|d|\\
           \max\left\{\begin{array}{l}
           B_{l+1}^n(j-1),\\ B_k^n(j),\\
           h^{e}_{d', \pi', \langle f_{i}^{*}\restrict |d'|:i<k\rangle}(n)
                \end{array}
                :
                \begin{array}{l}
                d'=d0^{j-|d|}, \pi'=\pi\\
                |\{ f_{i}^{*}\restrict|d'|:i<k\}|=k<l+1,\\
                (\forall i)f_i^{\ast}\restrict|d|\in\overline{g}^{\ast}\\
                (\forall i)(\forall |d|\leq t< j) f_{i}(t)\leq B_{l+1}^n(t)
                \end{array}\right\}                 & |d|\leq j.
\end{cases}\]}
\normalsize In order for our proof to work, following the idea of case $l=1$, we will define a compact space in $(\omega^{\omega})^{l+1}$ such that each coordinate is bounded by $B_{l+1}^n$. Restricting to the functions in this compact space is sufficient as we will see in the verification at the end of the case. 

Since $B_{l+1}^n$ is computable from $\bigoplus F_{s}$ we have that the space of collections of functions agreeing with $g_i^{\ast}$ up to $|d|$ and bounded by $B_{l+1}^n$ thereafter, defined by\[C_{l+1}^n=\{ \langle f_{i} : i<l+1\rangle: f_{i} \in\omega^{\omega}, f_{i}\restrict|d|=g_{i}^{\ast} \ \& \forall j\geq |d|\ f_{i}(j)\leq B_{l+1}^n(j) \}\] is effectivly compact with respect to $\bigoplus F_{s}$.

Furthermore, we define the sets \small{\[U^n_{m}=\{\langle h_{i} : i<l+1\rangle\in C_{l+1}^n \ : \ \exists q\leq \langle d, \pi, F_{s}\cup\{ h_{i} : i<l+1\}\rangle \ \varphi^{\langle ^{q}d,\, ^{q}\pi\rangle}_{e} (n)\downarrow < m \}.\]}

\normalsize We can do the same as in the case $l=1$. If the compact space is not the union of $U_{m}^{n}$ then we can satisfy (1). Otherwise, define $O_{d',\pi'}$ as before. Then we do as we did for $l=1$ to define  $m^{\ast}$ and $\langle _{\xi}d,\, _{\xi}\pi, F_s\cup\emptyset \rangle$ for all $\xi\leq \alpha(l+1)=\alpha$ (notice that $\alpha$ depends on $l+1$). Let 

\small\[h^{e}_{d,\pi, \langle g_{i}^{\ast}:i<l+1\rangle}(n)=\max\left\{\begin{array}{l}m^{\ast}, \max\{B_{l+1}^n(j):\exists \xi\leq \alpha (j\leq |_{\xi}d |)\}\\\min\{k:\forall \xi<\alpha\forall i\in{} _{\xi}d \forall \sigma\in \mbox{dom}(_{\xi}\pi^{i}) (\sigma\in k^{i}) \}\end{array}\right\}. \]

\normalsize Now we do the verification.

For (2a), the proof is the same as in the case $l=1$.

For (2) we have multiple cases. The proof is the same as in the case $l=1$ if $G=\overline{f}\in C^{n}_{l+1}$. If  $G=\overline{f}\notin C^{n}_{l+1}$ we know that there is a function in $G$ exceeding $B_{l+1}^n$. Assume that $g(j)>B_{l+1}^n(j)$ and that, for all $i<l+1$, $m<j$, $g_{i}(m)\leq B_{l+1}^n(m)$, i.e., $j$ is the first time that $g$ is above $B_{l+1}^n$. Let $G=G_{0}\cup G_{1}$ be such that for all $f\in G_{0}$, $f(j)>B_{k}^n(j)$ and for all $a\in G_{1}$, $a(j)\leq B_{k}^n(j)$. Again, we have two cases.

If $j\geq |_{\xi}d|$ for all $\xi\leq \alpha$, then we define $G'_{0}=G_{0}\restrict j^{\frown}0^\infty$, i.e., $G'_{0}$ is the collection of functions that look like a function of $G_{0}$ up to $j$ and then is followed by $0$. Now, $G'=G_{0}'\cup G_{1}\in C^{n}_{l+1}$ so there is $\xi \leq \alpha $ such that $\langle _{\xi}d,\, _{\xi}\pi, F_s\cup G' \rangle$ extends $\langle d,\, \pi, F_s\cup G' \rangle$ and this, since $|_{\xi}d|<j$ will be compatible with $\langle _{\xi}d,\, _{\xi}\pi, F_s\cup G \rangle$.

Finally, assume that $j< |_{\xi}d|$ for some $\xi\leq \alpha$. Notice that $|G_{1}|<l+1$, so, thanks to the definition of $B_{l+1}^n$, given  $g\in G_{0}$, we have \[g(j)>B_{l+1}^n(j)\geq h^{e}_{d', \pi, \langle a\restrict j+1: a\in G_{1}\rangle}(n),\] with $d'=d0^{|_{\xi}d|-|d|}$.

By our inductive hypothesis (specifically, by (2a)) we have that $\langle d', \pi, F_{s}\cup \{g\}\rangle$ is compatible with an extension $r\leq\langle d',\pi,F_s\cup G_{1}\rangle$ with $\varphi^{\langle ^{r}d,\, ^{r}\pi\rangle}(n)\downarrow<h^e_{d', \pi, \langle h\restrict j+1: h\in G_{1}\rangle}(n),$ and $r$ does not depend on $g$ (but it depends on $j$, $d'$, $G_1$ and $n$). As a final line, notice that

\[\varphi^{\langle ^{r}d,\, ^{r}\pi\rangle}(n)\downarrow<h^e_{d', \pi, \langle h\restrict j+1: h\in G_{1}\rangle}(n)\leq B^{n}_{|G_{1}|}(|d'|)\leq B^{n}_{l+1}(|_{\xi}d|)\leq h^{e}_{d,\pi, \langle g_{i}^{\ast}:i<l+1\rangle}(n)\]

\end{proof}

\subsection{Evasion Degrees}

\normalsize Now we will look at the results relating evasion degrees to the rest of the nodes in the computable version of Cicho\'{n}'s diagram. 

\begin{theorem}\label{hyperimmuneevade}If $A$ computes a weakly 1-generic, then $A$ is an evasion degree.\end{theorem}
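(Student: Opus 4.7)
The plan is to show that a weakly 1-generic real $G \leq_T A$ itself serves, essentially without modification, as a function evading every computable predictor. The underlying intuition is that ``evading'' is exactly an infinitary genericity condition: for each predictor $P = (D,\pi)$ and each $k$, it is easy to extend any finite string to one that already disagrees with $\pi$ at $k$ new points of $D$, so a sufficiently generic function automatically achieves infinitely-often disagreement against any single fixed predictor, and weak 1-genericity is precisely strong enough to defeat a countable family of such predictors simultaneously.

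Concretely, let $G \leq_T A$ be weakly 1-generic (working in $\omega^{\omega}$; the standard $2^{\omega}$ notion coincides with it at the level of Turing degrees, e.g.\ by coding graphs, so this costs nothing). For each computable predictor $P = (D,\pi)$ and each $k \in \omega$, I would consider the set
\[
V_{P,k} \;=\; \bigl\{\sigma \in \omega^{<\omega} : \#\{n \in D : n < |\sigma|,\ \pi_n(\sigma \restrict n) \neq \sigma(n)\} \geq k\bigr\}.
\]
Since $D$ and each $\pi_n$ are uniformly computable, $V_{P,k}$ is $\Sigma^0_1$ (in fact computable). To see that $V_{P,k}$ is dense, given $\tau \in \omega^{<\omega}$, use the infinitude of $D$ to pick $n_1 < \cdots < n_k$ in $D$ all exceeding $|\tau|$, extend $\tau$ to a string $\sigma$ of length $n_k + 1$, and at each $n_i$ choose $\sigma(n_i)$ to be any value different from $\pi_{n_i}(\sigma \restrict n_i)$, which is possible because $|\omega| > 1$.

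By weak 1-genericity, $G$ meets every such $V_{P,k}$. For a fixed computable predictor $P$, having an initial segment in $V_{P,k}$ for every $k$ forces $G$ to disagree with $\pi$ at infinitely many $n \in D$, i.e.\ $G$ evades $P$. Since this holds for every computable predictor and $G \leq_T A$, the function $G$ witnesses that $A$ is an evasion degree. There is no real obstacle here; this is simply the effective translation of the cardinal inequality $\mbox{cov}(\mathcal{M}) \leq \mathfrak{o}$, with the only mild bookkeeping being the (standard) passage between the $2^{\omega}$ and $\omega^{\omega}$ versions of weak 1-genericity.
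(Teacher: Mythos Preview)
Your proof is correct and follows essentially the same idea as the paper's. The paper packages the key observation as Lemma~\ref{meager}, showing that the set of functions predicted by $P$ decomposes as $\bigcup_i C_i$ with each $C_i=\{f:|\{n\in D:\pi(f\restrict_n)\neq f(n)\}|<i\}$ a $\Pi^0_1$ nowhere-dense class, and then invokes the fact that a weakly 1-generic escapes every computably meager set; your sets $V_{P,k}$ are precisely the dense $\Sigma^0_1$ complements of (the downward closures of) the $C_k$, so you are simply inlining that lemma and applying the definition of weak 1-genericity directly.
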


\begin{proof}If $A$ computes a weakly 1-generic, then it computes a function escaping all computably meager sets. Furthermore, the collection of sets predicted by any computable predictor is a computably meager set by Lemma \ref{meager}, and so $A$ computes a function evading any computable predictor.\end{proof}

\begin{theorem}\label{DNCevade}If $A$ is DNC, then $A$ is an evasion degree.\end{theorem}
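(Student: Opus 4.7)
The plan is to apply the standard s-m-n trick to turn the DNC property of a function $g \leq_T A$ into diagonalization against every computable predictor simultaneously. By the s-m-n theorem, I first produce a total computable function $j : \omega \times \omega^{<\omega} \to \omega$ with the property that, for every index $e$ and string $\sigma$, the partial computable function $\varphi_{j(e,\sigma)}$ is the constant function with value $\pi^e_{|\sigma|}(\sigma)$ (when defined). Combining this with the defining property of a DNC function,
\[ g(j(e,\sigma)) \neq \varphi_{j(e,\sigma)}(j(e,\sigma)) = \pi^e_{|\sigma|}(\sigma) \]
whenever the right-hand side converges.

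Next, I construct $f \leq_T g \leq_T A$ in stages indexed, via a fixed computable bijection, by pairs $(e,k) \in \omega^2$. The goal of stage $(e,k)$ is to commit a fresh position $n_{e,k}$, chosen to be the least element of $D_e$ greater than all previously committed positions, and declare $f(n_{e,k}) := g(j(e, f\restrict n_{e,k}))$. The key inequality above then guarantees $f(n_{e,k}) \neq \pi^e_{n_{e,k}}(f\restrict n_{e,k})$ whenever $P_e$ is a legitimate computable predictor. Positions not claimed by any stage receive the default value $0$. Since each valid $P_e$ is the focus of infinitely many stages $(e,0),(e,1),\dots$, the resulting $f$ is forced to have infinitely many evasion points in $D_e$, so it evades $P_e$.

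The main technical obstacle is that not every index $e$ names a legitimate computable predictor: $D_e$ may fail to be infinite computable, or $\pi^e$ may diverge, in which case the naive search at stage $(e,k)$ never halts and blocks the whole construction. The standard fix is to dovetail: at computation step $n$, advance each currently pending stage by one step of its search, and decide $f(n)$ according to whether some stage has at that moment claimed position $n$ (in which case use the s-m-n/DNC recipe) or not (in which case use $0$). This keeps $f$ total and computable in $A$, while guaranteeing that every legitimate $P_e$ eventually accumulates its infinitely many evasion points.
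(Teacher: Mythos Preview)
Your core idea---package each guess $\pi^e_{|\sigma|}(\sigma)$ as a diagonal value $\varphi_{j(e,\sigma)}(j(e,\sigma))$ via s-m-n, then use a DNC function to avoid it---is exactly the mechanism the paper uses. The difference is organizational, and that is where your argument has a gap.

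Your dovetailing is underspecified at a crucial point. You say stage $(e,k)$ searches for the least element of $D_e$ above the previously committed positions, and at computation step $n$ you set $f(n)$ according to whether some stage has ``at that moment claimed position $n$''. But advancing the search one step per computation step gives no synchronization between the element $m\in D_e$ a stage discovers and the current step $n$: the time to decide $m\in D_e$ is not bounded by $m$, so a stage may repeatedly discover elements $m<n$ that have already been assigned, and you have not argued that it ever catches up. (Concretely, if the decider for $D_e$ on input $m$ takes $2^m$ steps, the stage will only ever have verified membership for inputs up to roughly $\log n$ by step $n$.) This can be repaired with a more careful finite-injury argument, but as written the claim that ``every legitimate $P_e$ eventually accumulates its infinitely many evasion points'' is not justified.

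The paper sidesteps this bookkeeping entirely by first invoking Jockusch's theorem that every DNC degree computes a \emph{strongly} DNC function $f$, meaning $f(n)\neq\varphi_e(e)$ for all $e\leq n$. One then sets $g(m)=f(n_m)$ where $n_m$ exceeds every s-m-n code $j(e,g\restrict m)$ for $e\leq m$; finding $n_m$ needs only the codes, not the (possibly divergent) values $\pi^e_m(g\restrict m)$, so the construction is total with no dovetailing. This gives $g(m)\neq\pi^e_m(g\restrict m)$ for \emph{every} $m\geq e$ simultaneously, so one never has to locate elements of $D_e$: since $D_e$ is infinite, it contains infinitely many $m\geq e$, and $g$ evades $P_e$ at all of them. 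Your route trades the citation of Jockusch for a priority construction; the paper trades the priority machinery for the strongly DNC upgrade.
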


\begin{proof}Let $\{P_e=\langle D_e,\pi_e\rangle\}$ be a list of the partial computable predictors by index $e$. We note that by a result of Jockusch in \cite{jockusch1989degrees}, $A$ computes a DNC function if and only if it computes a strongly DNC function---that is, a function $f\leq_T A$ such that for all $n$, and $\forall e\leq n\ f(n)\neq\varphi_e(e)$. Then we can define $g(m)=f(n_m)$ for $n_m$ large enough that $f(n_m)\neq \pi_e(g\restrict_m)$ for all $e\leq m$.

To find $n_m$, we use the fact that $g\restrict m$ is a finite set. For each $e\leq m$ there is a index $k_{e}$ such that $\varphi_{k_e}(n)=\pi_{e}(g\restrict m)$ for all $n$.  Given $g\restrict m$ and $\{P_e=\langle D_e,\pi_e\rangle\}$ we can find computably $n_m\geq k_{e}$ for all $e\leq m$.
\end{proof}

\begin{corollary}\label{meagerengulfevade}If $A$ is weakly meager engulfing, then $A$ is an evasion degree. Furthermore, if $A$ is not low for weak 1-generics, then $A$ is an evasion degree.
\end{corollary}

\begin{proof}By a result of Rupprecht in \cite{rupprechtthesis} $A$ is weakly meager engulfing if and only if it is high or DNC. If $A$ is high, then it has hyperimmune degree, and so is an evasion degree by Theorem \ref{hyperimmuneevade} and the fact that hyperimmune degrees compute weakly 1-generics. If $A$ is DNC, then it is an evasion degree by Theorem \ref{DNCevade}. This completes the proof.

Surprisingly, we actually get an even stronger result, which differs greatly from the analogous case on the set theoretic side:

By a result of Stephan and Yu in \cite{stephan2006lowness}, $A$ is not low for weak 1-generics if and only if $A$ is hyperimmune or DNC. Combining this with Theorem \ref{hyperimmuneevade} and Theorem \ref{DNCevade}, we have the desired result.
\end{proof}

\begin{definition}We define a trace to be a function $g:\omega\rightarrow[\omega]^{<\omega}$ with $|g(n)|=n$. A computable trace will simply have $g$ computable.

We define $A\in2^{\omega}$ to be computably traceable if for all $f\in\omega^{\omega}$ with $f\leq_T A$, there is a computable trace $g$ such that $f(n)\in g(n)$ for all $n$.
\end{definition}

\begin{theorem}\label{evadeschnorr}If $A$ is an evasion degree then $A$ is not low for Schnorr tests.
\end{theorem}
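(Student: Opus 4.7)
The plan is to argue by contrapositive, invoking the theorem of Terwijn and Zambella that $A$ is low for Schnorr tests if and only if $A$ is computably traceable. So I would assume $A$ is computably traceable and construct, for every $f \leq_T A$, a computable predictor that predicts $f$, contradicting the hypothesis that $A$ is of evasion degree.

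First, given $f \leq_T A$, I would apply computable traceability to the initial-segment function $F(k) = f \restrict k$ (coded as an element of $\omega$ via the standard pairing). Using the flexibility of the Terwijn--Zambella characterization, the bound on the slalom size can be taken to be any computable order function going to infinity, so I can obtain a computable trace $T$ with $|T(k)| \leq h(k)$ for a slowly-growing $h$ and with $F(k) \in T(k)$ for cofinitely many $k$. Without loss of generality each element of $T(k)$ is itself a string in $\omega^k$.

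Next, I would define the predictor $P = (D,\pi)$ from $T$. Choose $D = \{d_0 < d_1 < \cdots\}$ to be a computably sparse set, and for each $n = d_j \in D$ and input $\tau \in \omega^n$, set $\pi(\tau)$ to be the common value at coordinate $n$ of the strings $\sigma \in T(m_j)$ with $\sigma \restrict n = \tau$ (for an appropriately chosen $m_j > n$), defaulting to $0$ if no such common value exists. Provided these candidates all agree at coordinate $n$ for almost every $j$, the predictor correctly outputs $f(d_j)$.

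The main obstacle is the combinatorial verification of this agreement. Since the tree of prefixes of the strings in $T(m_j)$ has at most $h(m_j)$ leaves and hence at most $h(m_j)-1$ branching nodes, the path of $F(m_j)$ through the tree can disagree with other candidates at only boundedly many levels; the danger is that those levels could be adversarially clustered near the prediction coordinate. Overcoming this requires carefully balancing the sparsity of $D$, the growth rate of $h$, and, if needed, intersecting candidate sets from several trace levels $m_j^{(1)}, m_j^{(2)}, \ldots$ or iterating traceability on the derived index functions $k \mapsto (\text{position of } f(k) \text{ in } g(k))$, whose ranges can be made to shrink at each step. Once uniqueness is secured on cofinitely many $d_j$, the resulting computable predictor succeeds on $f$, contradicting the evasion-degree hypothesis and completing the proof.
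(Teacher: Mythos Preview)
Your overall strategy---contrapositive via Terwijn--Zambella, then build a computable predictor from a trace using a branching-point count---is the same as the paper's, and you correctly isolate the key combinatorial obstacle. But the gap you flag is real, and none of the fixes you sketch closes it. With $D$ chosen in advance as a fixed ``computably sparse'' set, the trace can be adversarial: even with $|T(m)|\leq 2$ for all $m$, the single branching level of $T(m_j)$ can sit exactly at $d_j$ for every $j$, since $T(m_j)$ need only contain $f\restrict m_j$ together with one other string that splits off wherever it likes. No amount of balancing the sparsity of $D$ against the growth of $h$ prevents this, and the ``intersecting trace levels'' and ``iterated traceability'' ideas are not developed enough to tell whether they help.

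The paper's resolution is simple but decisive: do not fix $D$ in advance---let it be \emph{determined by the trace}. Partition $\omega$ into consecutive blocks $I_n$ with $|I_n|=n$, and trace the function $n\mapsto f\restrict_{I_n}$ with size bound $n$. Then $T(n)$ consists of at most $n$ strings of length $n$, and by the same branching count you already have, there are at most $n-1$ first-difference positions among them. Hence some index $j\in I_n$ is non-branching; put \emph{that} $j$ into $D$ and define $\pi_j$ to agree with every member of $T(n)$ there. The pigeonhole now succeeds on every block rather than (hopefully) cofinitely many, and the predictor is computable from $T$. Your argument becomes correct once you make this one change: choose each prediction coordinate after looking at the trace, not before.
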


\begin{proof}Let $A$ be low for Schnorr tests. Then, by a result of Terwijn and Zambella in \cite{terwijn2001computational}, it follows that $A$ is computably traceable. Let $f\leq_T A$ be a total function. Then we define $g$ by $g(n)=f\restrict_{I_n}$ where $I_n=\left[\frac{n(n-1)}{2},\frac{n(n+1)}{2}\right)$ (any computable partition of $\omega$ into disjoint sets with $|I_n|=n$ works here). Note that since $g\leq_T f\leq_T A$, it follows that $g$ is computably traceable. Then, by assumption, there is a computable trace $T$ where $T(n)\subset\omega^n$, $|T(n)|=n$, and $g\restrict_{I_n}\in T(n)$. However, for any $n$, there are at most $n-1$ values on which a first difference between members of $T(n)$ is witnessed. Put another way, there are at most $n-1$-many values $i$ such that there are $\sigma,\tau\in T(n)$ with $\sigma\restrict_i=\tau\restrict_i$, but $\sigma(i)\neq\tau(i)$. So there must be $j\in I_n$ where for all $\sigma,\tau\in T(n),\sigma\restrict_j=\tau\restrict_j\Rightarrow\sigma(j)=\tau(j)$. Then, we can computably build a predictor which predicts $f$ by adding $j$ to $D$, and accurately predicting all the elements of the trace.\end{proof}

This finishes the positive results (or implication results) involving the evasion degrees. Now, we show the negative results:

\begin{definition}
$X$ is \it{weakly Schnorr engulfing} if and only if $X$ computes a null set that contains all computable reals.
\end{definition}

\begin{corollary}\label{Evasion not engulfing}
There is an evasion degree that is not weakly Schnorr engulfing.
\end{corollary}

\begin{proof}
In \cite{rupprecht2010relativized}, Rupprecht showed that any Schnorr Random that is hyper-immune-free is not weakly Schnorr engulfing.

As any Schnorr Random is either High or DNC, using Theorem \ref{DNCevade}, we have that a Schnorr Random that is hyperimmune-free is an evasion degree.
\end{proof}

To prove the next theorem we will use a modified notion of clumpy trees introduced by Downey and Greenberg in \cite{downey2008turing}.

\begin{definition}A \emph{perfect function tree} is a function $T:2^{<\omega}\rightarrow2^{<\omega}$ that preserves extension and compatibility.

Let $T$ be a perfect function tree, $\sigma\in\mbox{im}\ T$, the image of $T$, and let $I=\langle I_n : n\in \omega \rangle$ where $I_n=[\frac{n(n-1)}{2}, \frac{n(n+1)}{2})$. We say that $T$ \emph{contains an $I$-clump above $\sigma$} if there is $n$ such that $|\sigma|=\frac{n(n-1)}{2}$ and for all binary strings $\tau$ of length $n, \sigma\tau=T(\rho\tau)$,
where $\sigma=T(\rho)$. We further define $T$ to be \emph{$I$-clumpy} if for all $\sigma\in T$ there exist $\tau\in T$ extending $\sigma$ such that $T$ contains an $I$-clump above $\tau$.\end{definition}

\begin{theorem}\label{Weaklynotprediction}There is an $A\in 2^{\omega}$ which is not of evasion degree, but it is  weakly Schnorr engulfing. 
\end{theorem}

\begin{proof}

Let $\{X_d\}_{d\in \omega}$ be a enumeration of all computable reals in $2^{\omega}$.

By an observation of Rupprecht in Theorem 19 of \cite{rupprecht2010relativized}, we have that if there are disjoint intervals $J^{n}$ such that $|J^{n}|\geq n$, the sequence $\langle A\restrict_{J^{n}}:n\in\omega \rangle$ is computable in $A$ and for every $d\in \omega$ there are infinitely many $n$ such that $A\restrict_{J^{n}}=X_{d}\restrict_{J^{n}}$ then $A$ is weakly meager engulfing.

The idea of this proof will be to use forcing with computable trees with some specific properties. First, at the $e$th stage, we will be pruning to a tree consisting entirely of paths $A$ for which $\varphi_e^A$ is computably predictable. We will use this to ensure that the result of our forcing does not compute an evading function. Second, the trees will be clumpy, allowing us to choose extensions which occasionally agree with $X_{d}$. This will mean our resulting set belongs to a weakly Schnorr engulfing degree. 

Given an initial segment $A_{e-1}$ and a computable tree $T_{e-1}$ extending this initial segment, we will prune our tree to $T_e$, so that there is a single predictor that always predicts $\varphi_e^A(n)$ for every remaining path $A\in T_e$ while maintaining the clumpiness requirement. 

At every stage in our construction, we will assume that there is no initial segment $\sigma$ in our current tree $T_{e-1}$ such that $\varphi_e^A$ is non-total for all paths $A\succ\sigma$. Additionally, we will assume that for any $\sigma\in T_e$, there exist $\tau_1,\tau_2\succ\sigma$ such that $\varphi_e^{\tau_1}\neq\varphi_e^{\tau_2}$. If either of these fail, we define $A_e=\sigma$ and $T_e$ is the portion of $T_{e-1}$ extending $\sigma$. In either case, the clumpiness condition is preserved for the next stage. In the case that the first assumption fails, $\varphi_e^A$ is not total for all $A\succ\sigma$, and so we need not predict it accurately. In the case that the latter assumption fails, $\varphi_e^A$ is computable for all $A\succ\sigma$, and so can be predicted easily.

Each run of the construction will go as follows: We will rotate through three distinct goals. We can think of them as clumping, differentiating and predicting. 

First, we will add clumps. Given a collection $\{\sigma_i\}$ of initial segments in the tree, each of length $n$, we will search for $m>n$ such that $T_{e-1}\restrict_m$ contains an $I$-clump above $\sigma_i$ for each $\sigma_i$. Then, the collection given by $T_{e-1}\restrict_m$ will be the $\{\tau_i\}$ for the next stage. 

Next, we will differentiate. We look for $j>m$ so that each $m$-length $\tau_i$ has an extension $\gamma_i$ of length $j$ such that $\varphi_e^{\gamma_i}$ is distinct for each such $\gamma_i$. We are guaranteed to find these by our previous assumption about splitting. 

In the final step, we predict. We now look for $d\in\omega$ such that $\varphi^{\gamma_i}_e(d)$ is undefined for all $\gamma_i$ previously defined. We add this $d$ to $D$ for the predictor we are building, and for each $\gamma_i$ we look for a further extension $\sigma_i\succ\gamma_i$ such that $\varphi_e^{\sigma_i}(k)\downarrow$ for all $k\leq d$. Then we define $\pi(\varphi_e^{\sigma_i}\restrict_d)=\varphi_e^{\sigma_i}(d)$. For all other strings $a$ of length $d$, we can define $\pi(a)=0$. Now, finally, these $\sigma_i$ become the initial segments of the tree that we start with for the next pass through these three steps. We repeat the process indefinitely. 

Finally, once $T_e$ is defined, given that $e=\langle d, s \rangle$ we will pick $A_e\succ A_{e-1}$ with $A_e\restrict I_{m}=X_{d}\restrict I_{m}$ with $m> s$. Such a string is guaranteed to exist because of the clumpiness condition on our tree.

Then, $A=\bigcup A_e$ is the desired degree, as it is a path through each $T_e$, and so $\varphi_e^A$ is computably predictable, but by construction, for every $d\in \omega$ there are infinitely many $n$ such that $A\restrict_{I_n}=X_{d}\restrict_{I_n}$
\end{proof}

\begin{corollary}\label{lowschnorrnotevade}There is a degree which is not computably traceable, but not an evasion degree.\end{corollary}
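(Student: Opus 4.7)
The plan is to obtain the desired degree immediately by combining the two preceding results. Theorem \ref{evasionpacking} produces a specific $A\in2^{\omega}$ that is simultaneously (i) not of evasion degree and (ii) of positive effective packing dimension. Lemma \ref{traceablepacking} asserts the converse implication that computable traceability forces effective packing dimension $0$. Taking the contrapositive of the lemma, any set of positive effective packing dimension fails to be computably traceable. So the witness $A$ from Theorem \ref{evasionpacking} automatically satisfies both halves of the corollary.

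Concretely, I would first invoke Theorem \ref{evasionpacking} to fix such an $A$, noting explicitly that it is not of evasion degree and that its packing dimension is (at least) $1/2>0$. Then I would cite Lemma \ref{traceablepacking} in its contrapositive form: if $A$ were computably traceable, its effective packing dimension would be $0$, contradicting what Theorem \ref{evasionpacking} provides. Hence $A$ is not computably traceable, and the corollary follows.

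There is essentially no obstacle here, because all the work has been front-loaded into Theorem \ref{evasionpacking} (the clumpy-tree forcing construction that simultaneously kills evasion and preserves positive packing dimension) and Lemma \ref{traceablepacking} (the Downey--Greenberg result on traceability and packing dimension). The proof amounts to a one-line application of these two results, so the write-up should be short and should merely point to the witness produced by Theorem \ref{evasionpacking} and invoke the contrapositive of Lemma \ref{traceablepacking}.
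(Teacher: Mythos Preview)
Your proposal is correct and follows exactly the same approach as the paper: the corollary is obtained immediately by combining Theorem \ref{evasionpacking} with (the contrapositive of) Lemma \ref{traceablepacking}. The paper's own proof is literally the one-line observation that the result is an immediate consequence of these two facts.
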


\begin{proof}This is an immediate result from Theorem \ref{Weaklynotprediction} and the fact that weakly Schnorr engulfing implies not low for Schnorr, i.e., not computably treceable.\end{proof}

In the proof of Theorem 19 of \cite{rupprecht2010relativized}, Rupprecht uses a forcing that is analogue to infinite equal forcing (known as Silver forcing in Set Theory). This forcing uses partial functions from $\omega$ to $2$ with a coinfinite computable domain.

It is possible to understand the proof of Theorem \ref{Weaklynotprediction} in the same terms if we visualize the clumps as the intervals where the function is not defined. Nevertheless, using computable trees instead of partial functions helps when we define the predictor. It is an open question whether or not you can have a degree that is not computable traceable, not weakly meager engulfing and not of evasion degree.

In our finished diagram including prediction and evasion (Figure \ref{computablecichonwithevasion}), we have included some of the alternate characterizations of nodes we used that include properties of and relations to the computable functions.

\section{Rearrangement}

The rearrangement number was recently introduced in \cite{blass2016rearrangement} by Blass, Brendle, Brian, Hamkins, Hardy, and Larson. All results and definitions about this characteristic can be found there.

\subsection{Definitions}

\begin{definition}The \emph{rearrangement number} $\mathfrak{rr}$ is defined as the smallest cardinality of any family $C$ of permutations of $\omega$ such that, for every conditionally convergent series $\sum a_n$ of real numbers, there is a permutation $p\in C$ for which 
\[\sum a_{p(n)}\neq\sum a_n.\]\end{definition}

A priori, there are a few different ways of making this happen, namely making the permuted series diverge to infinity, making the permuted series oscillate, and making the permuted series sum to a different finite sum than the original series. In practice, oscillation is easier to achieve than the other two, and so it only makes sense to isolate the other two possibilities, giving a few additional characteristics, where the variation requirement is stronger.

\begin{definition}We present three additional refinements, giving slightly different characterizations:
\begin{itemize}
\item$\mathfrak{rr}_f$ is defined the same way as $\mathfrak{rr}$, but where the sum is required to converge to a different finite number.
\item$\mathfrak{rr}_i$ is defined the same way, but the sum is required to diverge to either positive or negative infinity.
\item$\mathfrak{rr}_{fi}$ is defined the same way, but the sum is required to either diverge to infinity (positive or negative) or converge to a different finite number.
\end{itemize}\end{definition}

Simply by definition, one can easily see that $\mathfrak{rr}\leq\mathfrak{rr}_{fi}\leq\mathfrak{rr}_f,\mathfrak{rr}_i$. The authors in \cite{blass2016rearrangement} were able to show that it is consistent that $\mathfrak{rr}<\mathfrak{rr}_{fi}$, but were unable to conclusively show whether or not the latter three characteristics were separable from each other. Similarly, on the effective side, we have been unable to separate the finite case, the infinite case, or the case allowing either from each other, and so here we will only present the highness notions analogous to $\mathfrak{rr}$ and $\mathfrak{rr}_{fi}$ (although it should be clear what the other two would look like).

\begin{figure}
\centering
\begin{tikzpicture}
  \matrix (m) [matrix of math nodes,row sep=3em,column sep=1em,minimum width=2em]
  {
     &\mbox{Non}(\mathcal{M})&\mbox{Cof}(\mathcal{M})&&\mbox{Cof}(\mathcal{N})\\
     \mbox{Cov}(\mathcal{N})&\mathfrak{rr}&&\mathfrak{rr}_{fi}&\\
     &\mathfrak{b}&\mathfrak{d}&&\\
     \mbox{Add}(\mathcal{N})&\mbox{Add}(\mathcal{M})&\mbox{Cov}(\mathcal{M})&&\mbox{Non}(\mathcal{N})\\};
  \path[commutative diagrams/.cd, every arrow, every label,font=\scriptsize]
    (m-4-1) edge (m-2-1)
            edge (m-4-2)
    (m-2-1) edge (m-2-2)
    (m-2-2) edge (m-1-2)
    		edge (m-2-4)
    (m-3-2) edge (m-2-2)
    		edge (m-3-3)
    (m-3-3) edge (m-1-3)
    		edge (m-2-4)
    (m-4-2) edge (m-3-2)
    		edge (m-4-3)
    (m-1-2) edge (m-1-3)
    (m-1-3) edge (m-1-5)
    (m-4-3) edge (m-3-3)
    		edge (m-4-5)
    (m-4-5) edge (m-1-5);
\end{tikzpicture}
\caption{Cicho\'{n}'s diagram including $\mathfrak{rr}$ and $\mathfrak{rr}_{fi}$.} \label{cichonwithrearrangement}
\end{figure}
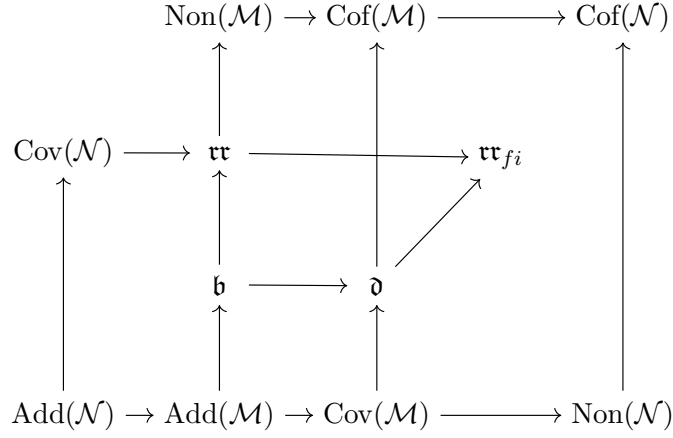

\begin{definition}We define a conditionally convergent series of rationals $\sum a_n$ to be \emph{computably imperturbable} if, for all computable permutations $p$, we have that
\[\sum a_n=\sum a_{p(n)}.\]
Also, we define $\sum a_n$ to be \emph{weakly computably imperturbable} if no computable permutation $p$ has that either
\[\sum a_{p(n)}=B\neq A=\sum a_n\quad\mbox{or}\quad\sum a_{p(n)}=\pm\infty.\]
Equivalently, we can define a series to be weakly computably imperturbable if the only way we get inequality of series under computable permutation is by oscillation, that is
\[\sum a_n\neq\sum a_{p(n)}\Rightarrow\sum a_{p(n)}\mbox{ fails to converges by oscillation}.\]
Finally, we define a real $X\in2^{\omega}$ as \it{(weakly) computably imperturbable}\textnormal{ if it computes a series with the corresponding property. We will refer to (weakly) computable imperturbable just as (weakly) imperturbable.}\end{definition}

We present here known facts about $\mathfrak{rr}$ and $\mathfrak{rr}_{fi}$ along with their computable analogs. All results can be found in \cite{blass2016rearrangement}.

\begin{theorem}\label{rrfacts}The following relationships are known for $\mathfrak{rr}$ and $\mathfrak{rr}_{fi}$.
\small{{\center\begin{tabular}{|l|l|c|}
\hline 
Cardinal Char.&Highness Properties&Theorem\\
\hline 
$\mathfrak{b}\leq\mathfrak{rr}$&high $\Rightarrow$ imperturbable&\ref{highimperturb}\\
\hline
$\mathfrak{d}\leq\mathfrak{rr}_{fi}$&hyperimmune $\Rightarrow$ weakly imperturbable&\ref{hyperimmuneimperturb}\\
\hline
$\mbox{cov}(\mathcal{N})\leq\mathfrak{rr}$&computes a Schnorr Random $\Rightarrow$ imperturbable&\ref{schnorrimperturb}\\
\hline
$\mathfrak{rr}\leq\mbox{non}(\mathcal{M})$&imperturbable $\Rightarrow$ weakly meager engulfing&\ref{meagerengulfimperturb}\\
\hline
$\mbox{CON}(\mbox{cov}(\mathcal{N})<\mathfrak{rr})$&imperturbable $\not\Rightarrow$ computes a Schnorr Random&Open\\
\hline
$\mbox{CON}(\mathfrak{b}<\mathfrak{rr})$&imperturbable $\not\Rightarrow$ high&\ref{imperturbhighsep}\\
\hline
$\mbox{CON}(\mathfrak{rr}<\mathfrak{rr}_{fi})$&weakly imperturbable $\not\Rightarrow$ imperturbable&\ref{imperturbweaksep}\\
\hline
$\mbox{CON}(\mathfrak{d}<\mathfrak{rr}_{fi})$&weakly imperturbable $\not\Rightarrow$ hyperimmune&\ref{imperturbhighsep}\\
\hline
\end{tabular}}}

\

\normalsize These results can be seen in figure \ref{cichonwithrearrangement} and \ref{computablecichonwithrearrangement}.
\end{theorem}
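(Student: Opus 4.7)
The statement is a summary table, so the plan is to establish each row as a separate result. I would organize the work into the positive implications (upper block), the row \textbf{imperturbable $\Rightarrow$ weakly meager engulfing}, and the separations (lower block), each of which calls for its own construction.

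For the positive implications the template is uniform: use the combinatorial content of the oracle $A$ to build a conditionally convergent series $\sum a_n \leq_T A$ of rationals that no computable permutation can perturb in the prescribed way. For \textbf{high $\Rightarrow$ imperturbable} I would mimic the approach of Theorem \ref{Schnorrpredict}, using that a high $A$ enumerates an index list of the total computable permutations $\{p_e\}$; the series would be built in successive alternating blocks of $\pm 1/n$-style terms, and at stage $e$ we consult the enumeration and choose the block length and sign balance so that the action of $p_e$ on the committed prefix cannot shift the target sum. For \textbf{weak 1-gen $\Rightarrow$ weakly imperturbable} I would use the hyperimmune function computed by a weak 1-generic to blow up block lengths faster than any computable bound, which prevents any computable permutation from shifting enough mass between blocks to produce a different finite limit or divergence to $\pm\infty$, while possibly leaving oscillation (which is consistent with weak imperturbability). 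For \textbf{computes a Schnorr random $\Rightarrow$ imperturbable} the Schnorr random would decide the signs in a $\sum \pm 1/n$ scheme: any fixed computable permutation that witnessed a sum change would correspond to a Schnorr test (with computably estimable measure) that the random avoids. For \textbf{imperturbable $\Rightarrow$ weakly meager engulfing} I would note that for each computable permutation $p$ the set of Cauchy-represented computable reals whose $p$-rearrangement agrees with the original sum is effectively meager, so an imperturbable oracle computes an effectively meager cover of the computable reals.

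For the separations the strategy is forcing with finite partial series. For \textbf{imperturbable $\not\Rightarrow$ high} (Theorem \ref{imperturbhighsep}) I would adapt the hyperimmune-free-basis machinery used in Theorem \ref{predhighsep}: force with finite partial approximations of a conditionally convergent series, add diagonalization requirements analogous to the $E_{e,n}$ there to prevent the generic from computing a dominating function, and use a compact-space/basis-theorem argument to keep the constructed series hyperimmune-free. For \textbf{weakly imperturbable $\not\Rightarrow$ imperturbable} (Theorem \ref{imperturbweaksep}) I would give an explicit computable series whose natural block structure admits only oscillating computable rearrangements, so the separation lives on the computable side. For \textbf{weakly imperturbable $\not\Rightarrow$ hyperimmune} (Theorem \ref{hyperimmunecorr}) the same basis-theorem trick in the same forcing would drop the resulting series into a hyperimmune-free degree.

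The main obstacle is the \textbf{high $\Rightarrow$ imperturbable} direction. By the Riemann Rearrangement Theorem the space of sum-changing permutations of any conditionally convergent series is enormous, and we must simultaneously defeat every computable one, not merely agree eventually as in prediction; this requires fine-grained simultaneous control of block sizes and sign balances, noticeably more delicate than the analogous block construction for prediction in Theorem \ref{predhighsep}. Once this core block machine is in place, the weaker positive results and the separation forcings should reuse it with relaxed invariants.
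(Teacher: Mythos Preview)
Your plan diverges from the paper's arguments in several places, and in two of those places there are genuine gaps.

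\textbf{The positive implications.} The paper's proofs of \emph{high $\Rightarrow$ imperturbable} and \emph{hyperimmune $\Rightarrow$ weakly imperturbable} do not build any block structure or diagonalize against an enumeration of permutations. Instead both use a single ``padding with zeros'' trick: take any computable conditionally convergent $\sum a_n$, choose a fast-growing $f\leq_T A$, and define $b_k=a_n$ if $k=f^n(0)$ and $b_k=0$ otherwise. If $f$ dominates every computable function, no computable permutation can change the order of more than finitely many nonzero $b_k$; if $f$ merely escapes, infinitely many partial sums of the permuted series coincide with partial sums of $\sum a_n$, which blocks convergence to a different value or divergence to $\pm\infty$. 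So the result you flag as the main obstacle is in fact one of the easiest, and your proposed construction, while plausible, is much heavier than needed. Your sketch for \emph{computes a Schnorr random $\Rightarrow$ imperturbable} is on the right track; the paper uses $a_n=(-1)^{A(n)}/n$ together with an effective Rademacher theorem and the fact that computable permutations preserve Schnorr randomness.

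\textbf{Imperturbable $\Rightarrow$ weakly meager engulfing.} Your description is inverted. You want an $A$-effectively meager set containing all computable objects; the paper works in the space of \emph{permutations} (which is computably bijective with $\omega^\omega$), not in the space of reals, and shows that the set of permutations that do \emph{not} send the imperturbable series to $+\infty$ is $A$-effectively meager. Since the series is imperturbable, every computable permutation lies in this set. Your formulation, indexed by a single permutation $p$, does not produce a single meager cover.

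\textbf{The separations.} None of the three separations requires a new forcing; all are corollaries of the positive implications together with known cuts of the effective Cicho\'n diagram. For \emph{imperturbable $\not\Rightarrow$ high}: any low ML-random is Schnorr random, hence imperturbable, and not high. For \emph{weakly imperturbable $\not\Rightarrow$ hyperimmune}: a hyperimmune-free PA degree computes a Schnorr random, hence is imperturbable and a fortiori weakly imperturbable. For \emph{weakly imperturbable $\not\Rightarrow$ imperturbable}: a nonrecursive low c.e.\ set is hyperimmune (hence weakly imperturbable) but, by Arslanov, neither high nor DNC, hence not weakly meager engulfing, hence not imperturbable. Your specific proposal for this last separation---an explicit computable series admitting only oscillating computable rearrangements---cannot work: the classical Riemann rearrangement procedure is effective, so every computable conditionally convergent series has a computable permutation making it converge to a different computable value.
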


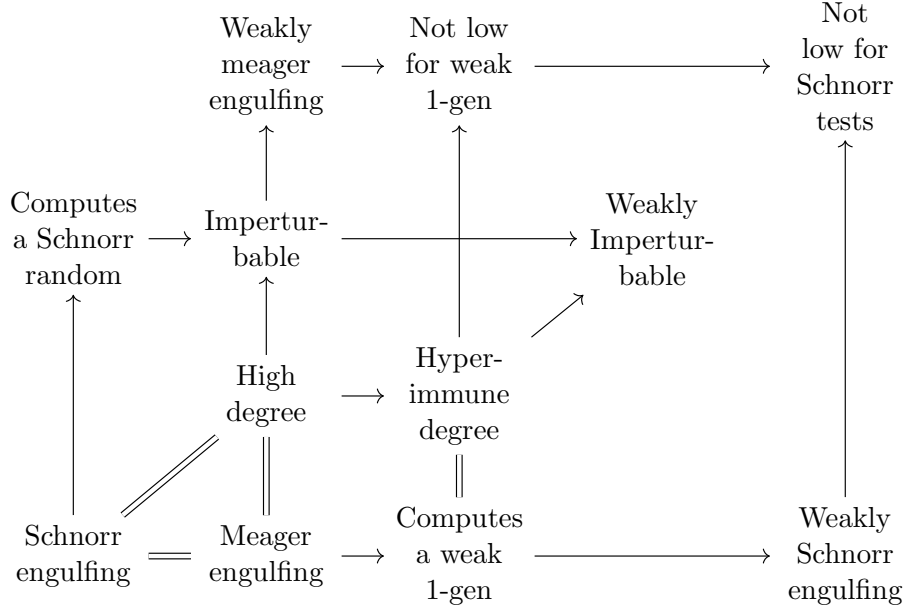
\begin{figure}
\centering
\begin{tikzpicture}[auto,
	every node/.style ={rectangle, fill=white,
      text width=4.5em, text centered,
      minimum height=2em}]
  \matrix (m) [row sep=1.5em,column sep=1.5em]
  {
     &\node (22) {Weakly meager engulfing};&\node (23) {Not low for weak 1-gen};&&\node (25) {Not low for Schnorr tests};\\
     \node (31) {Computes a Schnorr Random};&\node (32) {Impertur-bable};&&\node (34) {Weakly Impertur-bable};&&\\
     &\node (42) {High degree};&\node (43) {Hyper-immune degree};&\\
     \node (51) {Schnorr engulfing};&\node (52) {Meager engulfing};&\node (53) {Computes a weak 1-gen};&&\node (55) {Weakly Schnorr engulfing};\\
     };
	\path[commutative diagrams/.cd, every arrow, every label,font=\scriptsize]
    (31) edge (32)
    (22) edge (23)
    (23) edge (25)
    (32) edge (22)
    	 edge (34)
    (42) edge (32)
         edge (43)
    (53) edge (55)
    (52) edge (53)
    (55) edge (25)
    (51) edge (31)
    (43) edge (34)
         edge (23)
	(51) edge[-,commutative diagrams/equal] (52)
	(51) edge[-,commutative diagrams/equal] (42)
	(42) edge[-,commutative diagrams/equal] (52)
    (53) edge[-,commutative diagrams/equal] (43);
\end{tikzpicture}
\caption{Effective Cicho\'{n}'s diagram including imperturbability.} \label{computablecichonwithrearrangement}
\end{figure}

\subsection{Imperturbability results}

\normalsize The following is an adaptation of Theorems 15 and 16 in \cite{blass2016rearrangement}.

\begin{theorem}\label{highimperturb}If $X$ is high, then it is imperturbable.\end{theorem}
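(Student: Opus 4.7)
The plan is to effectivize the proof of $\mathfrak{b} \leq \mathfrak{rr}$ given by Blass et al.\ in \cite{blass2016rearrangement}. Since $X$ is high, Martin's theorem supplies a function $g \leq_T X$ that eventually dominates every total computable function. Using $g$, I recursively build a sequence $0 = n_0 < n_1 < n_2 < \cdots$ computable from $X$ by taking $n_{k+1} = n_k + \max(|I_{k-1}|, g(n_k)) + 1$, where $I_k = [n_k, n_{k+1})$; this ensures $|I_{k+1}| \geq |I_k|$ and $n_{k+1} > g(n_k)$. Define the rational series by $a_n = 0$ for $n \in I_0$ and $a_n = (-1)^k/(k|I_k|)$ for $n \in I_k$, $k \geq 1$. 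The block sums are $(-1)^k/k$, so the block-endpoint partial sums converge to $-\ln 2$, and partial sums strictly inside a block deviate from the nearest block endpoint by at most $|I_k|/(k|I_k|) = 1/k$, so $\sum a_n$ is conditionally convergent.

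Fix any computable permutation $p$ and set $f_p(n) = \max\{p(m), p^{-1}(m) : m \leq n\}$, which is total computable since $p$ is a bijection, and hence eventually dominated by $g$. So for all sufficiently large $k$, both $p([0, n_k))$ and $p^{-1}([0, n_k))$ lie in $[0, n_{k+1})$. Writing $A_K = p([0, n_{K+1}))$, this forces $[0, n_K) \subseteq A_K \subseteq [0, n_{K+2})$, so $A_K \triangle [0, n_{K+1}) \subseteq I_K \cup I_{K+1}$. Letting $r_K = |A_K \cap I_{K+1}| = |[0, n_{K+1}) \setminus A_K| \leq |I_K|$ and writing $c_k = 1/(k|I_k|)$, a direct calculation yields
\[
\sum_{n < n_{K+1}} a_{p(n)} \;-\; \sum_{m < n_{K+1}} a_m \;=\; r_K\bigl((-1)^{K+1} c_{K+1} - (-1)^K c_K\bigr),
\]
whose absolute value is bounded by $r_K(c_K + c_{K+1}) \leq 1/K + 1/(K+1) \to 0$, so the block-endpoint partial sums of the permuted series converge to $\sum a_n$.

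The remaining obstacle, and the step I expect to require the most care, is bounding the permuted partial sums strictly between block endpoints $n_{K+1}$ and $n_{K+2}$. For $n$ in this range, the containment $p^{-1}([0, n_K)) \subseteq [0, n_{K+1})$ forces $p(n) \geq n_K$, while $f_p(n_{K+2}) \leq g(n_{K+2}) < n_{K+3}$ gives $p(n) < n_{K+3}$; hence $p(n) \in I_K \cup I_{K+1} \cup I_{K+2}$. Grouping the tail sum $\sum_{n_{K+1} \leq n < N} a_{p(n)}$ by which block $p(n)$ inhabits, the three contributions are each bounded in magnitude by $|I_K| c_K$, $|I_{K+1}| c_{K+1}$, and $|I_{K+1}| c_{K+2}$ respectively, which are $1/K$, $1/(K+1)$, and (using $|I_{K+2}| \geq |I_{K+1}|$) at most $1/(K+2)$. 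So the variation is $O(1/K)$ uniformly in $N$, and combining with the block-endpoint convergence gives $\sum a_{p(n)} = \sum a_n$ for every computable permutation $p$, exhibiting $X$ as imperturbable.
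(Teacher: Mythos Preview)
Your argument is correct (modulo a trivial initialization issue at $k=0$, where $|I_{-1}|$ is undefined; setting it to $0$ fixes this). The block construction, the containments $[0,n_K)\subseteq A_K\subseteq[0,n_{K+2})$, the counting $r_K=|A_K\cap I_{K+1}|=|[0,n_{K+1})\setminus A_K|\leq|I_K|$, and the three-block bound on the intermediate partial sums all check out.

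Your route, however, is genuinely different from the paper's. You follow the block construction of Blass et al.\ directly: build a specific series with long constant-sign blocks whose lengths are governed by the dominant function, then do the bookkeeping to show that a computable permutation can only shuffle terms among three adjacent blocks, which contributes $O(1/K)$ to the partial sums. The paper instead takes an \emph{arbitrary} computable conditionally convergent series $\sum a_n$ and pads it with zeros, placing $a_m$ at position $f^m(0)$ (the $m$-fold iterate of the dominant function at $0$) and $0$ everywhere else. For each computable permutation $\varphi_e$ they define a computable function $g_e$ with the property that once $f$ dominates $g_e$, the permutation cannot invert the relative order of any two nonzero terms; hence only finitely many nonzero terms are displaced and the sum is literally unchanged. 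The paper's padding trick buys a much shorter argument with no partial-sum estimates at all, while your approach has the virtue of producing a concrete series with all terms nonzero (after $I_0$) and of mirroring the set-theoretic proof more faithfully.
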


\begin{proof}Let $X\in2^{\omega}$ be high. By a classic result of Martin in \cite{martin1966classes}, this means that there is a (strictly increasing) function $f\leq_T X$ such that $f$ dominates all computable functions. Let $\sum a_n$ be any computable conditionally convergent series. Define the sequence $\{b_k\}$ by
\[b_k=\begin{cases}a_n&k=f^n(0)\\0&\mbox{otherwise}\end{cases},\]
using the convention that $f^n$ is the $n$-times application of $f$, that is 
\[f^n(a)=\overbrace{f(\cdots f(f}^n(a))).\] 
We claim that $\sum b_{p(n)}=\sum a_n$ for all computable permutations $p$. To see that this is true, for each $e\in\omega$, we will define a computable function $g_e$ such that if $\varphi_e$ is a permutation, it follows that $\varphi_e(i)\leq n,g_e(n)\leq \varphi_{e}(j)\Rightarrow i\leq j$ for all $i,j\in\omega$. Clearly, given such computable functions, we can see that the series $\sum b_k$ defined above has the desired property, as $f$ dominates all of the $g_e$, and so no computable permutation alters the order of any more than finitely many non-zero elements, leaving the sum unchanged.

In order to define $g_e(n)$, we first assume $\varphi_e$ is a permutation, if it isn't, nothing that we do matters, as we do not have to defeat it. We begin searching computably for $A_n=\{l\in\omega:\varphi_e(l)\leq n\}$. At some finite stage in our computation, we will have found $l_k$ such that $\varphi_e(l_k)=k$ for all $k\leq n$. This follows from the fact that $\varphi_e$ is a permutation. Then, let $a=\max\{l_k:k\leq n\}$. Finally, we can define $g_e(n)=\max\{\varphi_e(m):m\leq a\}$. This $g_e$ has the desired property by construction. \end{proof}

The following is an adaptation of Theorem 18 in \cite{blass2016rearrangement}.

\begin{theorem}\label{hyperimmuneimperturb}If $X$ is of hyperimmune degree, then $X$ is weakly imperturbable.\end{theorem}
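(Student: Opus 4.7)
The plan is to adapt the construction in the proof of Theorem \ref{highimperturb}, replacing the strong domination condition by a weaker ``gap'' property derived from hyperimmunity. Since $X$ has hyperimmune degree, it computes a strictly increasing $f:\omega\to\omega$ that is not dominated by any computable function, so in particular $\mathrm{Range}(f)$ is a hyperimmune set. Take the $X$-computable rational series
\[
a_n=\begin{cases}(-1)^k/(k+1)&\text{if }n=f(k),\\ 0&\text{otherwise,}\end{cases}
\]
which, since inserting zeros preserves conditional convergence, satisfies $\sum_n a_n=\ln 2=:A$.

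Fix a computable permutation $p=\varphi_e$ and recycle the auxiliary function $g_e$ built in the proof of Theorem \ref{highimperturb}. Letting $a(n)$ denote the largest index $l$ with $\varphi_e(l)\leq n$ (computable from $e$ and $n$, and satisfying $a(n)\geq n$), the argument in that proof shows
\[
\{0,1,\dots,n\}\subseteq p([0,a(n)])\subseteq\{0,1,\dots,g_e(n)\}.
\]
Consequently the rearranged partial sums at indices $a(n)$ decompose as
\[
S_{a(n)}:=\sum_{m\leq a(n)}a_{p(m)}=T_n+E(n),
\]
where $T_n:=\sum_{k\leq n}a_k\to A$ and $E(n)$ is the sum of the nonzero contributions $a_{f(k)}$ with $f(k)\in(n,g_e(n)]$.

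The decisive input is a gap lemma: if $f$ is not dominated by any computable function, then for every computable strictly increasing $h$ there are infinitely many $n$ with $\mathrm{Range}(f)\cap(n,h(n)]=\emptyset$. Otherwise, iterating $h$ past the exceptional threshold would place at least one value of $f$ in each block $(\tau(m),h(\tau(m))]$, yielding a computable $\tau$ that eventually majorises the principal function of $\mathrm{Range}(f)$ and contradicting the hypothesis. Applied to $h=g_e$, this produces infinitely many $n$ at which $E(n)=0$, so along this subsequence $S_{a(n)}=T_n\to A$. Since $a(n)\to\infty$, any finite limit of $\sum_n a_{p(n)}$ must equal $A$, and divergence to $\pm\infty$ is blocked by the bounded subsequence. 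What remains is convergence to $A$ itself or oscillation, both of which are compatible with weak imperturbability.

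The only new ingredient compared with Theorem \ref{highimperturb} is this gap lemma, and it is exactly where the hypothesis degrades from ``high'' to ``hyperimmune''. That same degradation is what limits the conclusion to \emph{weak} imperturbability: the error term $E(n)$ can no longer be forced to vanish on all sufficiently large $n$, only along an infinite subsequence, which is enough to forbid a distinct finite sum or divergence to $\pm\infty$ but not to rule out oscillation.
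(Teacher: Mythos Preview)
Your proof is correct and follows the same overall strategy as the paper --- pad a conditionally convergent series with zeros at positions governed by an $X$-computable function $f$ that escapes domination, then use the $g_e$ bound from Theorem~\ref{highimperturb} to find a subsequence of rearranged partial sums converging to the original limit --- but the mechanism differs in a way worth noting. The paper places the nonzero terms at the iterates $f^0(0),f^1(0),f^2(0),\dots$, so that from the bare fact $f(m)>g_e(m)$ infinitely often it follows that $f^{n+2}(0)>g_e(f^n(0))$ infinitely often, and hence at most one nonzero term lies in the gap $(f^n(0),g_e(f^n(0))]$; the error is then bounded by $|a_{n+1}|\to 0$. You instead place the nonzero terms at $\mathrm{Range}(f)$ and invoke a separate gap lemma (hyperimmunity of $\mathrm{Range}(f)$ forces infinitely many $n$ with $\mathrm{Range}(f)\cap(n,g_e(n)]=\emptyset$), which gives error exactly zero along the subsequence. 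Your route isolates a clean combinatorial fact about hyperimmune sets and avoids the iterated-function bookkeeping; the paper's route avoids proving an auxiliary lemma at the cost of a slightly less transparent error estimate. One small point: your gap lemma needs $g_e(n)>n$ (or at least $g_e$ strictly increasing), which is not literally guaranteed by the construction in Theorem~\ref{highimperturb}, but this is harmless since $g_e(n)\geq n$ always and one can replace $g_e$ by $n\mapsto g_e(n)+n+1$.
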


\begin{proof}This proof will be very similar to that of Theorem \ref{highimperturb}. Here, let $X$ be of hyperimmune degree. Then, in particular, there is some $f\leq_T X$ such that $f>\varphi_e$ infinitely often for any $e$. That is, for every $e$, there are infinitely many $n$ with $f(n)>\varphi_e(n)$. Here, we will also require that $f$ is strictly increasing. Again, for $\sum a_n$ some computable conditionally convergent series, we define the sequence $\{b_k\}$ by 
\[b_k=\begin{cases}a_n&k=f^n(0)\\0&\mbox{otherwise}\end{cases}.\]

We claim that for all $\varepsilon>0$ and $e\in \omega$, if $\varphi_e$ is a permutation, then there are infinitely many distinct pairs $i,j\in\omega$ such that 
\[\left|\sum\limits_{k=0}^i b_{\varphi_e(k)}-\sum\limits_{k=0}^j a_k\right|<\varepsilon.\]
To see that this is true, we can use exactly the same $g_e$ as we used in Theorem \ref{highimperturb}. Remember, if $\varphi_e$ is a computable permutation, then $g_e$ is total computable. Since $f$ is not dominated by any computable function, it follows that $f(n)>g_e(n)$ infinitely often. In particular, since $f$ is monotone increasing, there must be infinitely many $n$ so that $f^{n+2}(0)\geq g_e(f^n(0))$. For each such $n$, there is an initial partial sum of the $b_{\varphi_e(k)}$ which differs from $\sum\limits_{k=0}^n a_k$ by at most $|a_{n+1}|$. These pairs have the desired property. 

Now, since $|a_n|\rightarrow0$ for $n$ large, the initial partial sums of the $b_{\varphi_e(k)}$ are infinitely often arbitrarily close to those of the $a_n$. It follows that $\sum b_{\varphi_e(k)}$ can neither converge to a different limit than $\sum a_n$, nor diverge to infinity. Thus we have that $\sum b_k$ is a weakly imperturbable series, as desired.\end{proof}

For the next lemma we will need the following definitions and facts from \cite{rute2012algorithmic}:

\begin{definition}A \emph{computable metric space} is a triple $\mathbb{X} = (X, d, S)$
such that\begin{enumerate}[(1)]
\item $X$ is a complete metric space with metric $d:X\times X\rightarrow[0,\infty)$.
\item $S=\{a_i\}_{i\in\omega}$ is a countable dense subset of $X$.
\item The distance $d(a_i, a_j)$ is computable uniformly from $i$ and $j$.\end{enumerate}
A point $x\in X$ is said to be computable if there is a computable function $h:\omega\rightarrow\omega$ such that for all $m > n$, we have $d(a_{h(m)},a_{h(n)})\leq2^{-n}$ and $x = \lim\limits_{n\rightarrow\infty}a_{h(n)}$. The sequence $(a_{h(m)})$ is the \emph{Cauchy-name for $x$}.\end{definition}

\begin{definition}Let $\mathbb{Y} = (Y, S, d_{\mathbb{Y}})$ be a computable metric space. The space of measurable functions from $(2^{\omega},\lambda)$ to $\mathbb{Y}$, where $\lambda$ is the Lebesgue measure on $2^{\omega}$, is a computable metric space under the metric
\[d_{\mbox{meas}}(f, g) = \int\min(d_{\mathbb{Y}}(f,g),1)\ d\lambda\]
and where the countable dense sets are the test functions of the form
$\varphi(x) = c_i$ when $x\in [\sigma_i
]$ (prefix-free $\sigma_0,\dots, \sigma_{k-1}\in2^{<\omega}; c_0,\dots, c_{k-1} \in S$) and $\varphi(x)=0$ otherwise.
The computable points in this space are called \it{effectively measurable functions}.
\end{definition}

\begin{lemma}[Rute \cite{rute2012algorithmic}]\label{Schnorrconvergence}Suppose $f:(2^{\omega},\lambda)\rightarrow \mathbb{Y}$ is effectively measurable with Cauchy-name $(\varphi_n)$ in $d_{\mbox{meas}}$. The limit $\lim\limits_{n\rightarrow\infty}\varphi_n(x)$ exists on all Schnorr Randoms $x$.\end{lemma}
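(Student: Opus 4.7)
The plan is to manufacture a Schnorr test capturing exactly those points where the sequence $(\varphi_n(x))$ fails to be Cauchy, and then invoke completeness of $\mathbb{Y}$ on the complement. First I would unpack the Cauchy-name hypothesis: by definition, $d_{\mathrm{meas}}(\varphi_{n+1},\varphi_n)\le 2^{-n}$, which spelled out means
\[
\int \min\bigl(d_{\mathbb{Y}}(\varphi_{n+1}(x),\varphi_n(x)),1\bigr)\,d\lambda(x)\le 2^{-n}.
\]
Applying Markov's inequality at the threshold $2^{-n/2}$ gives
\[
V_n := \bigl\{x : d_{\mathbb{Y}}(\varphi_{n+1}(x),\varphi_n(x))>2^{-n/2}\bigr\}, \qquad \lambda(V_n)\le 2^{-n/2}.
\]

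Next I would check that $V_n$ is effectively open uniformly in $n$. Since $\varphi_n$ and $\varphi_{n+1}$ are test functions of the form $\sum c_i \mathbf{1}_{[\sigma_i]}$ with $c_i\in S$, their common refinement partitions $2^\omega$ into finitely many cylinders on which each approximant takes a single value in the computable dense set $S$; the distances $d_{\mathbb{Y}}(c_i,c_j)$ are computable uniformly in $(i,j)$, so we can effectively list the cylinders contributing to $V_n$. Thus $V_n$ is clopen with computable-rational measure, and the sequence is uniformly computable. Setting $U_k=\bigcup_{n\ge k}V_n$ yields a uniformly $\Sigma^0_1$ sequence of open sets with $\lambda(U_k)\le \sum_{n\ge k}2^{-n/2}$, a tail whose value is a computable real and decreases effectively to $0$. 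After a harmless reindexing (or rescaling the threshold) this qualifies as a Schnorr test.

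Finally, if $x$ is Schnorr random then $x\notin U_k$ for some $k$, so $d_{\mathbb{Y}}(\varphi_{n+1}(x),\varphi_n(x))\le 2^{-n/2}$ for all $n\ge k$. The telescoping sum $\sum_{n\ge k}2^{-n/2}$ is finite, so $(\varphi_n(x))_{n\ge k}$ is Cauchy in $\mathbb{Y}$, and completeness of $\mathbb{Y}$ delivers the limit.

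The main obstacle I anticipate is the effectiveness of the Schnorr test rather than the analytic content, which is the classical Borel–Cantelli argument for almost-everywhere convergence from $L^1$-Cauchyness. For a Martin-L\"of test one could simply take $\lambda(U_k)\le 2^{-k}$, but Schnorr randomness demands that the measure bound itself be a computable real converging effectively to zero. The trick of reducing the threshold to $2^{-n/2}$ (rather than $2^{-n}$) gives summability with enough slack that $\lambda(U_k)$ is both computable and effectively null, and the fact that the $\varphi_n$ are piecewise-constant on cylinders with values in $S$ is what makes the opens $V_n$ uniformly c.e. in the first place; without the structural assumption that $\mathbb{Y}$ is a computable metric space with computable distances on $S$, neither of these effectiveness conditions would be automatic.
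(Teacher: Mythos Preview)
The paper does not prove this lemma at all; it is quoted from Rute \cite{rute2012algorithmic} and used as a black box in the proof of Lemma~\ref{effectiverademacher}. Your argument is the standard effectivized Borel--Cantelli proof and is essentially correct: the $V_n$ are clopen with uniformly computable measure because the $\varphi_n$ are simple test functions with values in $S$, the tail bound $\sum_{n\ge k}2^{-n/2}$ makes $\lambda(U_k)$ a uniformly computable real tending to $0$, and reindexing gives a genuine Schnorr test. There is nothing to compare against in the paper itself.
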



\begin{lemma}[Kolmogorov\cite{kolmogoroff1928summen}]\label{kolmogorov} Let $X_0,\dots,X_n$ be independent random variables with expected value $E[X_i]=0$ and finite variance. Then for each $\epsilon>0$
\[P\left[\max\limits_{0\leq k\leq n}\left(\sum_{i=0}^k X_i\right)\geq\epsilon\right]\leq\frac{1}{\epsilon^2}\sum\limits_{i=0}^n\mbox{Var}(X_i).\]
\end{lemma}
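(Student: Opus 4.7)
The plan is to mimic the classical Doob-style first-passage argument, which replaces a naive Chebyshev bound on $S_n$ by an argument that estimates the contribution of $S_n^2$ on the event that the running maximum first crosses the threshold. Write $S_k=\sum_{i=0}^k X_i$ and let $A=\{\max_{0\le k\le n}S_k\ge\epsilon\}$. The key move is to slice $A$ into the disjoint events
\[A_k=\{S_0<\epsilon,\ S_1<\epsilon,\ \dots,\ S_{k-1}<\epsilon,\ S_k\ge\epsilon\},\]
so that $A=\bigsqcup_{k=0}^n A_k$ and, crucially, $A_k$ is measurable with respect to $\sigma(X_0,\dots,X_k)$. On $A_k$ we have $S_k^2\ge\epsilon^2$, which immediately gives $\epsilon^2 P(A_k)\le E[S_k^2\mathbf 1_{A_k}]$.

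Next I would expand $S_n^2=(S_k+(X_{k+1}+\cdots+X_n))^2$ and take expectations against $\mathbf 1_{A_k}$. Independence of $(X_{k+1},\dots,X_n)$ from $\sigma(X_0,\dots,X_k)$, together with the mean-zero assumption, kills the cross term, so
\[E[S_n^2\mathbf 1_{A_k}]=E[S_k^2\mathbf 1_{A_k}]+E\bigl[(X_{k+1}+\cdots+X_n)^2\mathbf 1_{A_k}\bigr]\ge E[S_k^2\mathbf 1_{A_k}]\ge\epsilon^2 P(A_k).\]
Summing over $k$ and using disjointness yields $E[S_n^2\mathbf 1_A]\ge\epsilon^2 P(A)$. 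Since $E[S_n^2]\ge E[S_n^2\mathbf 1_A]$ and, by independence and mean zero, $E[S_n^2]=\mathrm{Var}(S_n)=\sum_{i=0}^n\mathrm{Var}(X_i)$, the desired inequality drops out.

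The only real obstacle is being careful with the cross-term computation: one must justify that $S_k\mathbf 1_{A_k}$ and $X_{k+1}+\cdots+X_n$ are independent (which requires that $A_k$ depends only on the first $k+1$ variables, exactly as defined above) and hence that the expectation of their product factors, making it vanish because $E[X_j]=0$ for $j>k$. Everything else is bookkeeping: monotonicity of expectation on $A_k$, nonnegativity of the leftover quadratic term, and the standard identity $\mathrm{Var}(S_n)=\sum\mathrm{Var}(X_i)$ for independent summands.
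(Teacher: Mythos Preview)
Your argument is the standard and correct proof of Kolmogorov's maximal inequality. The paper does not supply its own proof of this lemma; it is merely cited as a classical result of Kolmogorov \cite{kolmogoroff1928summen} and used as a black box in the proof of Lemma~\ref{effectiverademacher}. So there is nothing to compare against, but your proposal stands on its own.
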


This collection of lemmas will be used to prove the following result which is an effectivization of a theorem of Rademacher \cite{rademacher1922einige}.

\begin{lemma}\label{effectiverademacher}If the sequence of rationals $\{a_n\}$ is computable with the limit $\sum a_n^2<\infty$ also computable, and $X\in 2^\omega$ is a Schnorr Random, then $\sum a_n(-1)^{X(n)}$ converges.\end{lemma}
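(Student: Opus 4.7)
The plan is to view the partial sums $S_N(X) = \sum_{n\le N} a_n(-1)^{X(n)}$ as random variables on the Lebesgue space $(2^\omega,\lambda)$, exhibit an effective Cauchy name so that Lemma~\ref{Schnorrconvergence} supplies subsequential pointwise convergence on Schnorr randoms, and then use Kolmogorov's maximal inequality to promote this to convergence of the full sequence.

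First, I will observe that each $S_N$ is a rational-valued step function depending only on $X\restrict(N+1)$, and hence a computable point in the space of effectively measurable functions under the metric $d_{\mathrm{meas}}$. Since the Rademacher characters $X \mapsto (-1)^{X(n)}$ are orthonormal in $L^2(2^\omega,\lambda)$, for $M > N$ I obtain
\[
\| S_M - S_N \|_2^{\,2} \;=\; \sum_{N<n\le M} a_n^2, \qquad d_{\mathrm{meas}}(S_N,S_M) \;\le\; \|S_M - S_N\|_2 .
\]
Because $\sum a_n^2$ is a computable real, I can compute a strictly increasing $T:\omega\to\omega$ with $\sum_{n>T(k)} a_n^2 \le 2^{-2k}$, so that $(S_{T(k)})_{k\in\omega}$ becomes an effective Cauchy name in $d_{\mathrm{meas}}$ for some effectively measurable $S:2^\omega\to\mathbb{R}$. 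Applying Lemma~\ref{Schnorrconvergence} will then yield $S_{T(k)}(X) \to S(X)$ for every Schnorr random $X$.

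To upgrade this subsequential convergence to convergence of the full sequence, I will apply Lemma~\ref{kolmogorov} to the independent mean-zero increments $a_n(-1)^{X(n)}$ of variance $a_n^2$, bounding the measure of the clopen set
\[
U_k \;=\; \left\{\, X \in 2^\omega : \max_{T(k)\le N \le T(k+1)} \bigl|S_N(X) - S_{T(k)}(X)\bigr| \;\ge\; 2^{-k/2} \,\right\}
\]
by $2^{-k}$. Since each $U_k$ depends on only the first $T(k+1)+1$ bits of $X$, its measure is a uniformly computable rational and $\sum_k \lambda(U_k) \le 2$ is a computable real (approximate by partial sums with the geometric tail bound). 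A Solovay-style characterization of Schnorr randomness then guarantees that every Schnorr random $X$ lies in only finitely many $U_k$; combined with the previous step this forces $(S_N(X))_N$ to be Cauchy in $\mathbb{R}$, and hence convergent.

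The principal obstacle I anticipate is precisely this lift from subsequential to full convergence: Lemma~\ref{Schnorrconvergence} only controls the Cauchy name at a chosen subsequence, and without a maximal-type bound like Kolmogorov's inequality the partial sums could in principle oscillate wildly between the reference indices $T(k)$. Secondary care is required in justifying the Solovay-style criterion at the Schnorr (rather than Martin-L\"of) level; this is exactly why we need each $\lambda(U_k)$ to be a uniformly computable rational and the sum $\sum_k \lambda(U_k)$ to be a computable real, not merely c.e.\ or bounded.
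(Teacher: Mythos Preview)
Your proof is correct and uses the same two ingredients as the paper (Rute's Lemma~\ref{Schnorrconvergence} and Kolmogorov's inequality, Lemma~\ref{kolmogorov}), but with a different decomposition. The paper builds its Cauchy name $(\varphi_m)=(S_{i_m})$ and invokes Kolmogorov's inequality \emph{inside} the verification that $d_{\mathrm{meas}}(\varphi_m,\varphi_j)\le 2^{-m}$: it splits on the event $\{|\varphi_j-\varphi_m|\le 2^{-(m+1)}\}$ and bounds the complement via the maximal inequality. It then applies Lemma~\ref{Schnorrconvergence} and stops. You instead get the Cauchy name from the cheaper bound $d_{\mathrm{meas}}\le\|\cdot\|_2$, apply Lemma~\ref{Schnorrconvergence} for subsequential convergence, and reserve Kolmogorov's inequality for a separate Schnorr--Solovay test on the clopen sets $U_k$ controlling oscillation between $T(k)$ and $T(k+1)$.

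Your two-step route has the virtue of making explicit a point the paper glosses over: Lemma~\ref{Schnorrconvergence} as stated only guarantees convergence of the \emph{Cauchy name}, i.e.\ of the subsequence $(S_{i_m}(X))_m$, and one still needs to argue that the full sequence $(S_N(X))_N$ converges. Your Solovay-test step supplies exactly this bridge. (The paper's maximal bound, taken over all $k\ge 0$, already contains the needed information, but the passage from ``the name converges'' to ``the series converges'' is not spelled out.) The Schnorr--Solovay characterization you invoke---a Schnorr random lies in only finitely many $U_k$ whenever the $U_k$ are uniformly clopen and $\sum_k\lambda(U_k)$ is a computable real---is standard, so your argument is complete as written.
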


\begin{proof}To see this, we will find a Cauchy-name for the function $f(x)=\sum a_n(-1)^{x(n)}$ in the metric $d_{\mbox{meas}}$. Then we need only apply Lemma \ref{Schnorrconvergence} to get the desired result.

Given a computable sequence of rationals $\{a_n\}$ with $\sum a_n^2<\infty$ computable, and $m\in\omega$ we define $\varphi_m(x)=\sum\limits_{n=0}^{i_m}a_n(-1)^{x(n)}$ where $i_m$ is least such that 
\[\sum_{n=i_m}^\infty a_n^2 <\frac{1}{8^{m+1}}.\]
To see that this is a Cauchy-name, given $j>m$, if we define \[A_{j,m}=\left\{x\in2^{\omega}:|\varphi_j(x)-\varphi_m(x)|\leq\frac{1}{2^{m+1}}\right\}\] we have that 
\begin{align*}d_{\mbox{meas}}(\varphi_j,\varphi_m)&\leq\int\limits_{A_{j,m}}|\varphi_j(x)-\varphi_m(x)|\ d\lambda+\int\limits_{2^{\omega}\setminus A_{j,m}}1\ d\lambda\\
&\leq\frac{1}{2^{m+1}}+\lambda\left\{x\in2^{\omega}:\left|\sum_{n=i_m+1}^{i_j}a_n(-1)^{x(n)}\right|>\frac{1}{2^{m+1}}\right\}.\end{align*}
However, we can effectively bound the measure of the set in this inequality by \[\left\{x\in2^{\omega}:\left|\sum_{n=i_m+1}^{i_j}a_n(-1)^{x(n)}\right|>\frac{1}{2^{m+1}}\right\}
\subseteq\]\[\bigcup_{k=0}^{\infty}\left\{x\in2^{\omega}:\left|\sum_{n=i_m}^{i_m+k} a_{n}(-1)^{x(n)}\right|>\frac{1}{2^{m+1}}\right\}.\]
Then, applying Lemma \ref{kolmogorov}, we have 
\begin{align*}\lambda\left(\bigcup_{k=0}^{\infty}\left\{x\in2^{\omega}:\left|\sum_{n=i_m}^{i_m+k} a_{n}(-1)^{x(n)}\right|>\frac{1}{2^{m+1}}\right\}\right)&\leq \frac{1}{(1/2^{m+1})^2}\sum_{j=i_m}^{\infty}a_j^2\\
&<\frac{1}{2^{m+1}},\end{align*}
and so $d_{\mbox{meas}}(\varphi_m,\varphi_j)\leq\frac{1}{2^{m+1}}+\frac{1}{2^{m+1}}=\frac{1}{2^m}$, as desired. Thus, $\varphi_m$ is a Cauchy name, as desired. Then, by Lemma \ref{Schnorrconvergence}, it must converge on all Schnorr Randoms.
\end{proof}

\begin{lemma}[Folklore]\label{cantmessupaschnorr}A computable permutation of a Schnorr Random is Schnorr Random.\end{lemma}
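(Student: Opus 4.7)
The plan is to show that any computable permutation $p$ of $\omega$ induces a computable, measure-preserving map $\hat p : 2^\omega \to 2^\omega$ defined by $\hat p(X)(n) = X(p^{-1}(n))$, and then to pull back an alleged Schnorr test covering $\hat p(X)$ to one covering $X$. So suppose for contradiction that $X$ is Schnorr random but that $\hat p(X)$ is not: fix a Schnorr test $\{U_n\}_{n\in\omega}$ (uniformly $\Sigma^0_1$, $\mu(U_n) \le 2^{-n}$, with $\mu(U_n)$ uniformly computable) such that $\hat p(X) \in \bigcap_n U_n$. Define $V_n = \hat p^{-1}(U_n)$ and aim to show $\{V_n\}$ is itself a Schnorr test; since $X \in \bigcap V_n$ this contradicts Schnorr randomness of $X$.

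First I would handle basic cylinders. For $\sigma \in 2^{<\omega}$, membership $Y \in \hat p^{-1}([\sigma])$ is the condition $Y(p^{-1}(k)) = \sigma(k)$ for all $k < |\sigma|$, which is a constraint at the finitely many positions $p^{-1}(0),\dots,p^{-1}(|\sigma|-1)$; hence $\hat p^{-1}([\sigma])$ is a finite clopen union of basic cylinders, effectively computable in $\sigma$ (using $p^{-1}$, which is computable since $p$ is a computable permutation), and of the same Lebesgue measure $2^{-|\sigma|}$ as $[\sigma]$. From this it follows that if $U_n = \bigcup_i [\sigma^n_i]$ is given as an effective union of pairwise disjoint cylinders, then $V_n = \bigcup_i \hat p^{-1}([\sigma^n_i])$ is an effective union of pairwise disjoint clopen sets, hence uniformly $\Sigma^0_1$, with $\mu(V_n) = \mu(U_n)$.

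Next I would check the Schnorr condition. Since $\mu(V_n) = \mu(U_n)$, both the bound $\mu(V_n) \le 2^{-n}$ and the uniform computability of $\mu(V_n)$ are inherited directly from $\{U_n\}$; no new estimate is needed. Finally, the fact that $\hat p(X) \in U_n$ for every $n$ translates immediately to $X \in V_n = \hat p^{-1}(U_n)$ for every $n$, so $\{V_n\}$ is a Schnorr test capturing $X$, the desired contradiction.

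I do not anticipate a serious obstacle: the only place to be careful is the bookkeeping that $p^{-1}$ is computable uniformly (which is automatic because $p$ is a computable bijection of $\omega$) and that pulling back the cylinder decomposition of $U_n$ does not inflate the measure. Both follow from the fact that the induced action of $\hat p$ on $2^\omega$ permutes coordinates and is therefore an automorphism of the measure space $(2^\omega,\lambda)$. The same argument shows the converse direction, so computable permutations preserve Schnorr randomness in both directions.
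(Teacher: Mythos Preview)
Your argument is correct and is precisely the standard folklore proof: the coordinate permutation $\hat p$ is a computable automorphism of $(2^\omega,\lambda)$, so pulling back a Schnorr test through it yields another Schnorr test. The paper itself gives no proof of this lemma---it is simply stated with attribution to folklore---so there is nothing to compare against, but your write-up is exactly what one would expect here and the care you take with computability of $p^{-1}$ and preservation of measure under pullback of cylinders is appropriate.
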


The following is an adaptation of Lemma 7 in \cite{blass2016rearrangement}.

\begin{lemma}\label{permmix}Given a computable permutation $p$, there is a computable permutation $q$ with the property that there are infinitely many $i$ such that $\{q(n):n\leq i\}=\{p(n):n\leq i\}$ and infinitely many $j$ such that the same happens with the identity, i.e.,  $\{q(n):n\leq j\}=\{0,\dots,j\}$.\end{lemma}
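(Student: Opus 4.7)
The plan is to construct $q$ in stages, alternately \emph{catching up to $p$} and \emph{catching up to the identity}. We maintain the invariant that after stage $s$ the partial function $q_s$ is a bijection from some initial segment $\{0,\dots,k_s-1\}$ onto a finite set $R_s\subset\omega$, and we arrange that at the end of every even stage the range equals an initial segment of the enumeration of $p$, while at the end of every odd stage the range equals an initial segment of $\omega$.

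Start with $q_0=\emptyset$, so $k_0=0$ and $R_0=\emptyset$. At even stage $s+1$, search computably for the least $i\geq k_s-1$ with $\{p(0),\dots,p(i)\}\supseteq R_s$; such an $i$ exists because $p$ is a permutation, so each element of the finite set $R_s$ has a preimage under $p$. Letting $i_{s+1}=i$, extend $q_s$ to $q_{s+1}$ with domain $\{0,\dots,i\}$ by defining $q_{s+1}(k_s),q_{s+1}(k_s+1),\dots,q_{s+1}(i)$ to enumerate the elements of $\{p(0),\dots,p(i)\}\setminus R_s$ in, say, increasing order. Since both $\{0,\dots,i\}$ and $\{p(0),\dots,p(i)\}$ have size $i+1$ and $R_s$ has size $k_s$, the counts match and $q_{s+1}$ is a bijection from $\{0,\dots,i\}$ onto $\{p(0),\dots,p(i)\}$; in particular $\{q(n):n\leq i_{s+1}\}=\{p(n):n\leq i_{s+1}\}$. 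Odd stages proceed symmetrically with the identity in place of $p$: find the least $j\geq k_s-1$ with $\{0,\dots,j\}\supseteq R_s$, and fill in the missing values in increasing order, producing a $j_{s+1}$ witnessing $\{q(n):n\leq j_{s+1}\}=\{0,\dots,j_{s+1}\}$.

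Set $q=\bigcup_s q_s$. The construction is computable because $p$ is, injectivity is preserved at every stage since we only ever add fresh range elements, and surjectivity follows from the odd stages: every $n\in\omega$ eventually lies in $\{0,\dots,j_{s+1}\}$ for some odd stage $s+1$ and hence is in the range of $q$. Totality on $\omega$ follows from the fact that both $k_s$ and the sequence of stages grow without bound. The infinite sequences $(i_{s+1})$ (even $s$) and $(j_{s+1})$ (odd $s$) then witness the two required conditions.

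There is no real obstacle here beyond bookkeeping: the only thing to verify is the counting identity that, at each stage, the number of range elements we still need to add equals the number of free domain slots we have, which is immediate from the fact that $p$ (resp.\ the identity) is a bijection and that $R_s$ is already contained in $\{p(0),\dots,p(i)\}$ (resp.\ $\{0,\dots,j\}$) by choice of $i$ (resp.\ $j$).
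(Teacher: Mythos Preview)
Your proof takes essentially the same approach as the paper's: build $q$ in stages, alternating between matching an initial segment of $p$'s enumeration and matching an initial segment of $\omega$, extending by a finite bijection each time.

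One small technical gap (which the paper's own write-up also glosses over): as stated, your construction can stall. If at some stage $R_s=\{p(0),\dots,p(k_s-1)\}=\{0,\dots,k_s-1\}$ --- for instance whenever $p$ fixes some initial segment setwise, and in particular if $p(0)=0$ --- then the least admissible $i$ (and $j$) at every subsequent stage is $k_s-1$, no extension is ever made, and $q$ remains a finite partial function. Your assertion that ``$k_s$ grows without bound'' is then false, and totality fails. The fix is immediate: require $i\geq k_s$ and $j\geq k_s$ rather than $\geq k_s-1$, forcing each stage to strictly enlarge the domain; then $k_s\to\infty$ and the rest of your argument goes through unchanged.
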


\begin{proof}We can essentially just build this. Let $p$ be a computable permutation, then we alternate between conditions. We define $q_0(0)=0$, and then we build $q$ in stages such that the domain of $q_s$ will always be an initial segment of $\omega$. For each $s>0$, we do the following:

If $s$ is odd, we aim to add an $i$ so that $\{q(n):n\leq i\}=\{p(n):n\leq i\}$. To do this, we begin to search computably for $m_k\in\omega$ for $k$ on which $q_{s-1}$ has already been defined such that $p(m_k)=q_{s-1}(k)$ for each $k\in\mbox{dom}(q_{s-1})$. Then we will define $q_s$ up to $\max\{m_k\}$ by simply building a bijection between $\{0,\dots,\max\{m_k\}\}$ and $\{p(0),\dots,p(\max\{m_k\})\}$ picking one element at a time while respecting $q_{s-1}$. This is simple, as the collection is computable, and $q_{s-1}$ is already a bijection with a subset, and so we can simply extend. Then, $\max\{m_k\}$ will be the desired $i$.

If $s$ is even, we aim to add a $j$ so that $\{q(n):n\leq j\}=\{0,\dots,j\}$. This is even more straightforward. The $j$ we choose will be $j=\max(\mbox{range}(q_{s-1}))$, and we can simply build a bijection between the finite, computable, same-size sets, $\{0,\dots,j\}\setminus\mbox{dom}(q_{s-1})$ and $\{0,\dots,j\}\setminus\mbox{range}(q_{s-1})$, in order to extend $q_{s-1}$ to $q_s$.

It is straightforward to see that, from the construction, $q=\bigcup q_s$ is a bijection, and $\mbox{range}(q)=\mbox{dom}(q)=\omega$. Thus, $q$ is a computable permutation, and has the desired property.\end{proof}

Note, this result can actually be extended so that, given any two permutations $p_1,p_2$, there is a permutation $q\leq_T p_1\oplus p_2$ such that there are infinitely many $i,j$ such that $\{q(n):n\leq i\}=\{p_1(n):n\leq i\}$ and $\{q(n):n\leq j\}=\{p_2(n):n\leq j\}$.

The following is an adaptation of Theorem 6 in \cite{blass2016rearrangement}.

\begin{lemma}\label{easyoscillation}If $\sum a_n$ is not imperturbable, then there is a computable permutation $p$ such that $\sum a_{p(n)}$ fails to converge due to oscillation.\end{lemma}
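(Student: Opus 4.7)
The plan is to leverage Lemma \ref{permmix} to splice together two permutations whose rearranged partial sums have different limiting behavior, forcing oscillation. Since $\sum a_n$ is not computably imperturbable, fix a computable permutation $p_0$ witnessing this, i.e.\ $\sum a_{p_0(n)} \neq \sum a_n$ (as values in $\mathbb{R} \cup \{\pm\infty\} \cup \{\text{oscillates}\}$). If $\sum a_{p_0(n)}$ already fails to converge by oscillation we are done taking $p = p_0$, so we may assume the rearranged series converges in $\mathbb{R} \cup \{\pm\infty\}$ to some value $B \neq A$, where $A := \sum a_n$.

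Next, I would apply Lemma \ref{permmix} to $p_0$ to produce a computable permutation $q$ with two infinite families of indices: indices $i_k \to \infty$ where $\{q(n) : n \le i_k\} = \{p_0(n) : n \le i_k\}$, and indices $j_k \to \infty$ where $\{q(n) : n \le j_k\} = \{0,\dots,j_k\}$. The key observation is that the set-equality $\{q(n) : n \le i\} = \{p_0(n) : n \le i\}$ forces the partial sums to coincide:
\[
\sum_{n=0}^{i_k} a_{q(n)} = \sum_{n=0}^{i_k} a_{p_0(n)}, \qquad \sum_{n=0}^{j_k} a_{q(n)} = \sum_{n=0}^{j_k} a_n.
\]
Hence the partial sums of $\sum a_{q(n)}$ contain one subsequence converging to $A$ and another converging (or tending) to $B$.

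Since $A \neq B$ in $\mathbb{R} \cup \{\pm\infty\}$, the partial sums of $\sum a_{q(n)}$ cannot converge to any extended real, so the series fails to converge by oscillation and $p := q$ is the desired permutation. The main thing to be careful about is the case $B = \pm \infty$: one must note that the subsequence at indices $j_k$ stays near the finite value $A$ while the subsequence at indices $i_k$ escapes to $\pm\infty$, which still prevents convergence in $\mathbb{R} \cup \{\pm\infty\}$ and therefore qualifies as oscillation. No computability subtleties arise beyond those already packaged inside Lemma \ref{permmix}, since $p_0$ is computable and so is $q$.
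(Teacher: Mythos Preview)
Your proposal is correct and follows essentially the same approach as the paper: both reduce to the non-oscillating case, invoke Lemma~\ref{permmix} to splice the witnessing permutation with the identity, and observe that the resulting partial sums have subsequences tending to the two distinct limits $A$ and $B$ (finite or infinite), forcing oscillation.
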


\begin{proof}Let $\sum a_n$ be a series which is not imperturbable. That is, there is a computable permutation $p$ such that 
\[\sum a_n\neq\sum a_{p(n)}.\]
We can assume that $\sum a_{p(n)}=\pm\infty$ or $\sum a_{p(n)}=B\neq A=\sum a_n$, otherwise there is nothing to show. Now let $q$ be as in Lemma \ref{permmix}. This $q$ has the desired property. If $\sum a_{p(n)}=\infty$, then for $i$ as in the lemma, we have that
\[\sum_{n=0}^i a_{q(n)}=\sum_{n=0}^i a_{p(n)},\]
thus we can see that these partial sums grow without bound, but simultaneously, for $j$ as in the lemma, we have that
\[\sum_{n=0}^j a_{q(n)}=\sum_{n=0}^j a_{n},\]
and so these partial sums tend towards $A=\sum a_n$. Thus, the whole series must be non-convergent due to oscillation. A similar argument shows that if $\sum a_{p(n)}=B\neq A$, then there are infinite subsequences of partial sums of $\sum a_{q(n)}$ converging to both $A$ and $B$, which also means that $\sum a_{q(n)}$ must be non-convergent due to oscillation.
\end{proof}

\begin{theorem}\label{schnorrimperturb}If $X$ computes a Schnorr Random, then $X$ is imperturbable.\end{theorem}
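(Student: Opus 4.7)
The plan is to start from a Schnorr random $R\leq_T X$ and use it to build the conditionally convergent rational series $c_n=(-1)^{R(n)}/(n+1)$, then show that every computable rearrangement of this series still converges; combined with Lemma \ref{easyoscillation}, this forces imperturbability.

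First I would verify $\sum c_n$ itself. The underlying sequence $a_n=1/(n+1)$ is a computable sequence of rationals with $\sum a_n^2=\pi^2/6$ a computable real, and $R$ is Schnorr random, so Lemma \ref{effectiverademacher} yields convergence of $\sum a_n(-1)^{R(n)}=\sum c_n$. Since $\sum|c_n|=\sum 1/(n+1)=\infty$, the series is conditionally convergent, and it is evidently computable from $X$.

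Next, I would show that for every computable permutation $p$, the rearrangement $\sum c_{p(n)}$ still converges. Writing $b_n=1/(p(n)+1)$ and $R'(n)=R(p(n))$, we get $\sum c_{p(n)}=\sum b_n(-1)^{R'(n)}$. The sequence $b_n$ is a computable sequence of rationals, and $\sum b_n^2$ is an absolutely convergent rearrangement of $\sum 1/(n+1)^2$, so it still equals the computable real $\pi^2/6$. The sequence $R'$ is obtained from $R$ by the computable bit-permutation $p$, so it is Schnorr random by Lemma \ref{cantmessupaschnorr}. A second application of Lemma \ref{effectiverademacher} gives convergence of $\sum c_{p(n)}$.

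Finally, I would invoke the contrapositive of Lemma \ref{easyoscillation}: if $\sum c_n$ were not computably imperturbable, some computable permutation would cause the rearranged series to diverge by oscillation, contradicting the convergence just established. Hence $\sum c_n$ is computably imperturbable, so $X$ is imperturbable. The main conceptual obstacle is not a calculation but spotting that Rademacher alone does not tell us $\sum c_n$ and $\sum c_{p(n)}$ share the \emph{same} value---only that both converge. Lemma \ref{easyoscillation} is the bridge: imperturbability can only be broken by an oscillating rearrangement, and ubiquitous convergence rules this out. A secondary technical check is that $\sum b_n^2$ remains a computable real after rearrangement, which is immediate from absolute convergence.
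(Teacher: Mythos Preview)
Your proposal is correct and follows essentially the same approach as the paper: define the series with Schnorr random signs on $1/(n+1)$, apply Lemma~\ref{effectiverademacher} together with Lemma~\ref{cantmessupaschnorr} to show that every computable permutation yields a convergent series, and then invoke Lemma~\ref{easyoscillation} to upgrade ``always converges'' to ``imperturbable.'' Your write-up is in fact slightly more careful than the paper's (you avoid the $n=0$ issue and explicitly flag why Lemma~\ref{easyoscillation} is needed as a bridge).
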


\begin{proof}Let $X\in2^{\omega}$ and $A\leq_T X$ be Schnorr Random. Then, we claim that if we define $a_n=\frac{(-1)^{A(n)}}{n}$, the series $\sum a_n$ is imperturbable. To see this, let $p$ be a computable permutation, then $\sum a_{p(n)}$
converges by Lemma \ref{effectiverademacher} and Lemma \ref{cantmessupaschnorr}. Namely, the sequence $\left\{\frac{1}{p(n)}\right\}$ is a computable sequence by construction, \[\sum\left(\frac{1}{p(n)}\right)^2=\sum\frac{1}{n^2}=\frac{\pi^2}{6}\] is computably converging to a computable number, and the set of indices of negative entries of our sequence $\{a_{p(n)}\}$ is Schnorr Random by Lemma \ref{cantmessupaschnorr}. Thus, we can apply Lemma \ref{effectiverademacher}, and so the series converges for all computable permutations. Further, since this series must converge for all computable permutations, it follows from Lemma \ref{easyoscillation} that it must be imperturbable.\end{proof}

The following is an adaptation of Theorem 11 in \cite{blass2016rearrangement}.

\begin{theorem}\label{meagerengulfimperturb}If $X$ is imperturbable, then $X$ is weakly meager engulfing.\end{theorem}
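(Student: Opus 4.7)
The plan is to effectivize the proof that $\mathfrak{rr} \leq \mathrm{non}(\mathcal{M})$ from \cite{blass2016rearrangement}. Given $X$ imperturbable, fix $\sum a_n$, a conditionally convergent rational series computable from $X$ every computable permutation of which preserves the sum.

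I would first parameterize permutations of $\omega$ by $2^\omega$. For any $x \in 2^\omega$, parse $x$ as a concatenation of blocks $0^{y(0)}1\,0^{y(1)}1\,0^{y(2)}1\,\ldots$, padding with $0$s at the end if $x$ has only finitely many $1$s, to obtain $y \in \omega^\omega$. Define the permutation $p_x$ by alternating: $p_x(2n)$ is the smallest natural number not yet used, and $p_x(2n+1)$ is the $y(n)$-th smallest unused element (interpreting an undefined $y(n)$ as $0$). The even-step rule guarantees $p_x$ is a genuine permutation of $\omega$, and $x \mapsto p_x$ is computable, so every computable $x$ gives a computable permutation.

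For each $N \in \omega$ and rational $\epsilon > 0$, consider
\[C_{N,\epsilon} = \left\{x \in 2^\omega : \left|\textstyle\sum_{n<k} a_{p_x(n)} - \sum_n a_n\right| \leq \epsilon \text{ for all } k \geq N\right\},\]
a uniformly $\Pi^0_1(X)$ set. The crux is $X$-effective nowhere-density: given any $\sigma \in 2^{<\omega}$, I would $X$-computably extend $\sigma$ to some $\sigma'$ by appending suitable blocks $0^{r_j}1$ that force the odd-step values $p_x(2m+1),p_x(2m+3),\ldots,p_x(2K-1)$ of every $x \succ \sigma'$ to be a prescribed sequence of large positive indices of $\sum a_n$, chosen so that $\sum_{n<2K} a_{p_x(n)}$ exceeds $\sum_n a_n + \epsilon$ and $2K \geq N$; this is possible because $\sum_{a_n>0} a_n = +\infty$ and the positive indices are $X$-computable, so $[\sigma'] \cap C_{N,\epsilon} = \emptyset$.

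Then $M = \bigcup_N C_{N,1}$ is an $X$-effectively meager $F_\sigma$ subset of $2^\omega$ containing every $x$ with $\sum a_{p_x(n)} = \sum a_n$; by imperturbability, every computable $x$ has such a computable preserving $p_x$, so every computable real lies in $M$, witnessing that $X$ is weakly meager engulfing. The main technical subtlety is ensuring that the parameterization actually produces permutations for every $x$—which the even-step forcing takes care of—and that the extension step moves partial sums arbitrarily while only adding a finite stem, which is available because each additional prescription $p_x(2n+1)=j$ is effected by appending a single block $0^{r}1$ whose length $r$ is $X$-computable from the current state.
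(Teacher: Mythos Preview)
Your overall strategy matches the paper's, but there is a gap: both your claim that $C_{N,\epsilon}$ is uniformly $\Pi^0_1(X)$ and your $X$-effective nowhere-density step presuppose that the limit $L=\sum_n a_n$ is an $X$-computable real. The definition of imperturbable only guarantees that the \emph{sequence} $(a_n)$ is $X$-computable; a conditionally convergent $X$-computable series need not have an $X$-computable sum, since nothing provides an effective modulus of convergence. Without $X$-access to $L$ you cannot $X$-semidecide the strict inequality $\left|\sum_{n<k}a_{p_x(n)}-L\right|>\epsilon$, nor can you $X$-effectively recognise when your extended partial sum has passed $L+\epsilon$, so neither the $\Pi^0_1(X)$ claim nor the avoidance function goes through as written.

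The paper avoids this entirely by using a condition that does not mention $L$: its nowhere-dense pieces are $E_k=\bigcap_{m\ge k}\{p:\sum_{n\le m}a_{p(n)}\le k\}$, with $k$ an integer bound. Any permutation whose rearranged series converges has eventually bounded partial sums and hence lies in some $E_k$, and both the $\Pi^0_1(X)$ and the $X$-effective avoidance are then immediate. Your argument repairs instantly with the same substitution. A secondary wrinkle: your map $x\mapsto p_x$ is discontinuous at $x$ with only finitely many $1$s (the padding convention for $y(n)$ requires inspecting the entire tail), so $C_{N,\epsilon}$ is not literally closed; this does not actually harm the avoidance step, and the eventually-zero reals form an effectively meager set you can absorb, but it means the $\Pi^0_1(X)$ phrasing needs a caveat. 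The paper instead works directly in the space of permutations and appeals to a computable bijection with $\omega^\omega$, trading your parameterization issue for a different piece of bookkeeping.
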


\begin{proof}We will actually show that $X$ is weakly meager engulfing in the space of permutations, but there is a computable bijection between $\omega^{\omega}$ and the space of permutations. Let $X$ be imperturbable, then there is a conditionally convergent imperturbable series $\sum a_n\leq_T X$. We claim that the set of permutations leaving this sum unchanged is contained in an $X$-effectively meager set. In particular, the set of permutations which do not make the sum $+\infty$ is contained in the set
\[E=\bigcup\limits_{k\in\omega}\bigcap\limits_{m\geq k}\left\{p:\sum\limits_{n=0}^ma_{p(n)}\leq k\right\}.\]
Now, we simply observe that the intersection
\[E_k=\bigcap\limits_{m\geq k}\left\{p:\sum\limits_{n=0}^ma_{p(n)}\leq k\right\}\]
is $\Pi^0_1$ in $X$, additionally, it is nowhere dense: given $\pi$ an element of $E_{k}$, $\sigma$ an initial segment of it and $M=\left|\sum_{n=0}^{|\sigma|}a_{p(n)}\right| $, we can find $a_{s_1},..., a_{s_{t}}$ such that $s_{i}>\pi(n)$ for all $n\leq |\sigma|$ and $\sum_{i=0}^{t}a_{s_{i}}> M+k$ since $\{a_{n}\}$ is conditionally convergent. Then, we can extend $\sigma$ to the initial segment of a permutation, call this $\tau$, such that \[\sum_{n=0}^{|\sigma|+t}a_{\tau(n)}=\sum_{n=0}^{|\sigma|}a_{\pi(n)}+\sum_{i=0}^{t}a_{s_{i}}> k. \] 
Any extension of $\tau$ is not in $E_{k}$, therefore, $E_k$ is nowhere dense.

Thus, $E$ is an $X$-effectively meager set of permutations containing all computable permutations, as desired.\end{proof}

We can immediately see that almost all of the forgoing implications are not reversible. This follows from the theorems plus existing known cuts of the computable Cicho\'{n}'s diagram. These cuts are cataloged in \cite{bbtnn} \S 4.2.

\begin{corollary}\label{imperturbhighsep}There is an $X$ which is imperturbable and hyperimmune free (also, not high). In particular, there is an $X$ which is weakly imperturbable and is also hyperimmune-free.\end{corollary}

\begin{proof}This is a direct result of Theorem \ref{schnorrimperturb} plus the fact that there is a Schnorr Random which is hyperimmune free. In fact, there is a low ML-random, which we can see from the low basis theorem plus the existence of a universal ML-test. See e.g. \cite{niesbook} Theorem 1.8.37.\end{proof}

\begin{corollary}\label{imperturbweaksep}There is an $X$ which is weakly imperturbable but not imperturbable.\end{corollary}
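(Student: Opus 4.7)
The plan is to combine Theorem \ref{hyperimmuneimperturb} with the contrapositive of Theorem \ref{meagerengulfimperturb}, together with Rupprecht's characterization (used already in Corollary \ref{meagerengulfevade}) of weakly meager engulfing degrees as exactly those that are either high or DNC. Theorem \ref{hyperimmuneimperturb} yields that any hyperimmune degree is weakly imperturbable; the contrapositive of Theorem \ref{meagerengulfimperturb} combined with Rupprecht's characterization says that every imperturbable degree must itself be high or DNC. Hence it suffices to exhibit a degree $X$ which is hyperimmune (so weakly imperturbable) but which is neither high nor DNC (so not weakly meager engulfing, and therefore not imperturbable).

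The existence of such a degree is a standard separation in the effective Cicho\'{n} diagram. For instance, one can take a weakly 1-generic $X\leq_T \emptyset'$ obtained by the usual forcing with dense open sets; such an $X$ can be taken low, hence not high, is automatically hyperimmune (since weakly 1-generic implies hyperimmune degree), and weakly 1-generics are known not to compute DNC functions. Alternatively, we can simply cite the separations cataloged in \cite{bbtnn} \S 4.2, which show that the notions hyperimmune, high, and DNC do not pairwise coincide and in particular that a hyperimmune degree that is neither high nor DNC exists.

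The main thing to double-check is the non-DNC part of the separation for the witness chosen; this is classical and independent of the imperturbability framework, so there is no substantive obstacle beyond citing the right existence result.
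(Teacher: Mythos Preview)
Your approach is essentially the same as the paper's: both combine Theorem \ref{hyperimmuneimperturb}, the contrapositive of Theorem \ref{meagerengulfimperturb}, and the characterization of weakly meager engulfing as high-or-DNC, reducing the problem to exhibiting a hyperimmune degree that is neither high nor DNC. The only difference is the choice of witness: the paper takes a nonrecursive low c.e.\ set and invokes Arslanov's completeness criterion to rule out DNC, whereas you take a low weakly $1$-generic.

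One small caution on your witness: the classical result that generics do not compute DNC functions is stated for $1$-generics, not for weakly $1$-generics, and it is not clear that the weaker hypothesis suffices. Since every $1$-generic is weakly $1$-generic (hence hyperimmune) and low $1$-generics exist, simply replacing ``weakly $1$-generic'' by ``$1$-generic'' in your argument makes the witness unimpeachable without changing anything else.
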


\begin{proof}We will use the fact that weakly meager engulfing is equivalent to high or DNC, a proof of which can be found in \cite{kjos2006kolmogorov}. The corollary follows directly from Theorems \ref{hyperimmuneimperturb} and \ref{meagerengulfimperturb} plus the existence of a set of hyperimmune degree which is not weakly meager engulfing. Any nonrecursive low c.e.\ set suffices. Obviously, being of hyperimmune degree means that it is also weakly imperturbable. Additionally, by Arslanov's completeness criterion (\cite{niesbook}, 4.1.11), such a set cannot be DNC, and is not high by definition. Thus, the set is also not weakly meager engulfing.\end{proof}



In figure 6 it is possible to see were imperturbability stands in the effective Cicho\'n's diagram.


\section{Questions}

\begin{question}Is there an $A\in2^{\omega}$ of prediction degree which does not compute a Schnorr Random?\end{question}

\begin{question}\label{evadenotlow1gen}Is there an $A$ which is an evasion degree and low for weak 1-generics?
\end{question}

\begin{question}
Is there an $A$ of a degree that is not computable traceable, not weakly meager engulfing and not of evasion degree?
\end{question}

\begin{question}Is imperturbable equivalent to weakly meager engulfing?\end{question}

\begin{question}Does weakly imperturbable imply any known highness notion?\end{question}

\begin{question}Can we separate the finite case and the infinite case of weakly imperturbable from each other or from the combined notion?\end{question}

\begin{question}Is there an $A\in2^{\omega}$ which is non-computable and not weakly imperturbable?\end{question}

\pagebreak

\bibliographystyle{abbrv}
\bibliography{biblio}
%

\end{document}